\newtheorem{lem}{Lemma}
\newtheorem{rem}{Remark}
\newtheorem{thm}{Theorem}
\newtheorem{prop}{Proposition}
\newtheorem{cor}{Corollary}
\newcommand{\E}{\mathbb{E}}
\title{Universality of the nodal length of bivariate random trigonometric polynomials}
\author{J\"urgen Angst, Guillaume Poly, Hung Pham Viet}
\begin{document}

\maketitle

\begin{abstract}We consider random trigonometric polynomials of the form 
\[
f_n(x,y)=\sum_{1\le k,l \le n} a_{k,l} \cos(kx) \cos(ly),
\]
where the entries $(a_{k,l})_{k,l\ge 1}$ are i.i.d. random variables that are centered with unit variance. We investigate the length $\ell_K(f_n)$ of the nodal set $Z_K(f_n)$ of the zeros of $f_n$ that belong to a compact set $K \subset \mathbb R^2$. We first establish a local universality result, namely we prove that, as $n$ goes to infinity, the sequence of random variables $n\,  \ell_{K/n}(f_n)$ converges in distribution to a universal limit which does not depend on the particular law of the entries. We then show that at a macroscopic scale, 
the expectation of $\ell_{[0,\pi]^2}(f_n)/n$ also converges to an universal limit. Our approach provides two main byproducts: (i) a general result regarding the continuity of the volume of the nodal sets with respect to $C^1$-convergence which refines previous findings of \cite{rusakov,iksanov2016local,azais2015local} and (ii) a new strategy for proving small ball estimates in random trigonometric models, providing in turn uniform local controls of the nodal volumes.
\end{abstract}

\setcounter{secnumdepth}{3} 
\setcounter{tocdepth}{3}
\tableofcontents

\section{Introduction}

The study of nodal sets associated to various kinds of random functions is a central topic of probability theory, at the crossroad of various domains of mathematics and physics such as linear algebra, number theory, geometric measure theory or else quantum mechanics or nuclear physics, just to name a few. In this context, universality results refer to asymptotic properties of these random nodal domains, which hold regardless of the nature of the randomness involved. 
Establishing such universal properties for generic zero sets allows one to manage what would be otherwise inextricable objects, which explains the tremendous importance of this particular area of research. As such, the literature on this topic is huge and we refer to  the introduction of \cite{tao2014local} and the references therein for a general overview.

\medskip

When the random functions under consideration are multivariate, the zeros  are no more isolated points but instead random curves/surfaces/manifolds whose volume is, among others, a natural quantity of interest. Ranging from algebraic manifolds to nodal lines of random eigenfunctions of Laplace-Beltrami operators on tori or spheres, this topic has attracted a lot of attention very recently. Non exhaustively, we refer for instance to \cite{zelditch,gayet2012betti,letendre2016expected,letendre2016variance} regarding random algebraic manifolds and to \cite{rudnick2008volume,oravecz2008leray,wigman2010fluctuations,nazarov2010random,fyodorov2015number,marinucci2015non} regarding random eigenfunctions. Nevertheless, in each situation considered in the above references, the underlying randomness emerges from Gaussian distribution and there seems to be actually no results dealing with the dependency of the studied phenomena on the particular choice of the distribution of the randomness. One reason possibly explaining the lack of results of universality in multivariate frameworks is that most techniques successfully used in univariate settings, such as complex analysis tools or else counting the changes of sign, seem hardly extendable to higher dimensions. For instance, to the best of our knowledge, there is no simple analogous in $\mathbb{C}^2$ of the Jensen formula which plays a central role in universality questions for univariate algebraic polynomials, see \cite{tao2014local}. On the opposite, we point out the fact that whatever the dimension is, a Kac--Rice formula still holds and allows one to manage remarkably well the case of absolutely continuous random fields. In this article, we investigate the natural question of asymptotic universality of volumes in the framework of bivariate random trigonometric polynomials with random coefficients that are only assumed to be i.i.d and standardized. Let us describe below our model in details.

\medskip

Let $(a_{k,l})_{k,l\ge 1}$ be a sequence of independent and identically distributed random variables whose common law is centered with unit variance. We consider the random function $f_n : \mathbb R^2 \to \mathbb R$ defined as
\begin{equation} \label{def.poly}
f_n(x,y)=\sum_{1\le k,l \le n} a_{k,l} \cos(kx) \cos(ly), \;\; (x,y) \in \mathbb R^2,
\end{equation}
and its renormalized analogue
\begin{equation} \label{def.poly2}
F_n(x,y) :=\frac{1}{n} f_n\left(\frac{x}{n}, \frac{y}{n}\right)= \frac{1}{n} \sum_{1\leq k, \ell, \leq n} a_{k , \ell} \cos \left( \frac{k x}{n}\right)\cos \left( \frac{\ell y }{n}\right).
\end{equation}
We denote by $Z_K(f)$ the zeros of a function $f$ in a compact set $K \subset \mathbb R^2$ and by $\ell_K(f)$ the length, or $1-$dimensional Hausdorff measure of $Z_K(f)$ : 
\[
\ell_K(f) := | Z_K(f)|, \;\; \hbox{where} \;\; Z_K(f):=\{(x,y) \in K \subset \mathbb R^2, \; f(x,y)=0\}.
\]
\begin{figure}[ht]
\begin{center}
\includegraphics[scale=0.25]{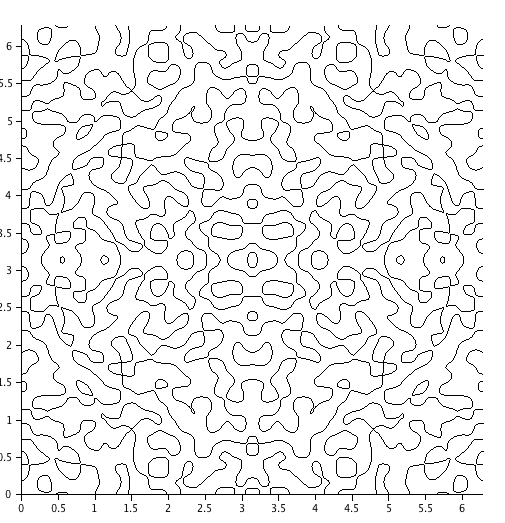}\includegraphics[scale=0.25]{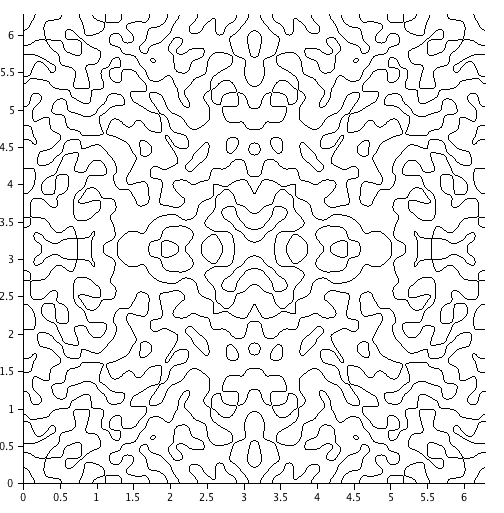}\includegraphics[scale=0.3]{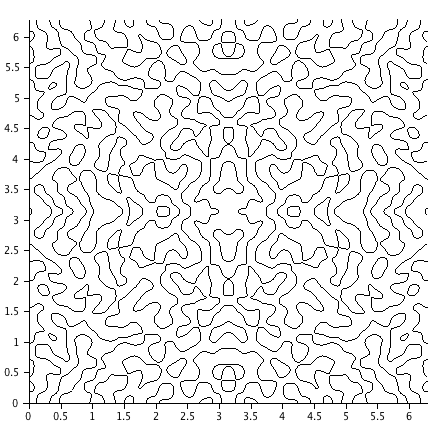}
\end{center}
\caption{A realization of the random nodal set $Z_K(f_n)$ for $K=[0,2\pi]^2$, $n=20$, with, from left to right, Bernoulli, Gaussian, centered exponential entries.}
\end{figure}\par
\noindent

\medskip

Our first main result is the following local universality result which states that, at a microscopic scale, the length of the nodal set converges in distribution to a universal limit. 

\begin{thm}[Local universality, Theorem \ref{theo.local} below]\label{theo.main1}
For any fixed compact $K \subset \mathbb R^2$, the sequence of random variables $(\ell_K(F_n))_{n \geq 1}$ converges in distribution, as $n$ tends to infinity, to an explicit random variable, whose law is independent of the particular law of the entries $(a_{k,l})_{k,l\ge 1}$.
\end{thm}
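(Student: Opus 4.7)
My plan is to prove that the random field $F_n$, viewed as a random element of the Polish space $C^1(K)$, converges in distribution to a universal centered Gaussian random field $F_\infty$, and then to deduce convergence of the nodal length via a continuous mapping argument that invokes byproduct (i) announced in the abstract. Once the limit is identified as Gaussian, universality is automatic since the law of $F_\infty$ is entirely determined by its covariance, which in turn only depends on the unit variance of the entries.

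\textbf{Finite-dimensional convergence.} For any finite collection of points $z_1,\dots,z_N\in\mathbb R^2$, the random vector $\bigl(F_n(z_i),\nabla F_n(z_i)\bigr)_{1\le i\le N}$ is a linear form in the i.i.d.\ array $(a_{k,l})$ with deterministic coefficients of size $O(1/n)$. The Lindeberg--Feller CLT therefore applies under the sole assumption of unit variance, and the variance computations reduce to two-dimensional Riemann sums, e.g.\
\begin{equation*}
\E[F_n(x,y)\,F_n(x',y')]\longrightarrow \int_0^1\cos(tx)\cos(tx')\,dt\cdot\int_0^1\cos(sy)\cos(sy')\,ds,
\end{equation*}
identifying a universal limit covariance, hence a universal Gaussian field $F_\infty$, independent of the law of $a_{k,l}$.

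\textbf{Tightness in $C^1(K)$.} This is the main technical obstacle since only two moments of the coefficients are available, which is generally too few to apply Kolmogorov--Centsov directly. My plan is a truncation scheme: set $\tilde a_{k,l}=a_{k,l}\mathbf 1_{|a_{k,l}|\le t_n}-\E[a_{k,l}\mathbf 1_{|a_{k,l}|\le t_n}]$ for a slowly growing sequence $t_n$, define the truncated field $\tilde F_n$ accordingly, and show that $\|F_n-\tilde F_n\|_{C^1(K)}\to 0$ in probability using only the $L^2$ control inherited from unit variance. The truncated variables are bounded and hence have all moments, so a standard chaining / Kolmogorov--Centsov argument applied to $\tilde F_n$ and its first derivatives, helped by the bounded-frequency trigonometric structure (Bernstein-type estimates), yields tightness of $\tilde F_n$ in $C^1(K)$, and therefore of $F_n$.

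\textbf{Continuous mapping and conclusion.} Once $F_n\Rightarrow F_\infty$ in $C^1(K)$, Skorokhod's representation realizes the convergence almost surely on an auxiliary probability space. Byproduct (i) of the paper asserts that $f\mapsto \ell_K(f)$ is continuous at every $f\in C^1(K)$ for which $0$ is a regular value on $K$ and no nodal line is tangent to $\partial K$; a Bulinskaya-type argument, relying on the nondegeneracy of the explicit limit covariance, shows that these events have full probability under $F_\infty$. The continuous mapping theorem then delivers $\ell_K(F_n)\to \ell_K(F_\infty)$ in distribution, and the law of the limit is universal since that of $F_\infty$ is.
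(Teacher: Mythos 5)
Your overall strategy---establish $C^1(K)$-convergence of $F_n$ to a universal Gaussian field, then deduce convergence of the nodal length by a continuous mapping argument resting on a deterministic continuity theorem for $f\mapsto \ell_K(f)$ under $C^1$ convergence---is exactly the paper's architecture, and your finite-dimensional and continuous-mapping steps match the paper's (Proposition~\ref{conv.C1}, Lemma~\ref{lem.nondeg}, Theorem~\ref{thm.conti}, Theorem~\ref{theo.local}).

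The one place you depart is tightness. You treat the lack of higher moments as an obstacle to Kolmogorov--Centsov and propose a truncation scheme, applying chaining to the bounded truncated field and then controlling $\|F_n-\tilde F_n\|_{C^1(K)}$ in probability via $L^2$ (Sobolev) bounds. This can be made to work, but the paper avoids truncation entirely: it invokes the $C^1$-tightness criterion of Rusakov--Seleznjev (their Theorem 2 and Remarks 2, 3), which for smooth random fields requires only the second-moment Lipschitz bounds
\[
\E\bigl[\,|F_n(z)-F_n(z')|^2\,\bigr]\;\lesssim\;\|z-z'\|^2,\qquad
\E\bigl[\,|\partial_i F_n(z)-\partial_i F_n(z')|^2\,\bigr]\;\lesssim\;\|z-z'\|^2,
\]
which hold under the sole unit-variance assumption. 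So the paper's tightness step is shorter and uses strictly fewer ingredients than yours; your route is self-contained but more labored. One small accuracy point in your favor: you correctly flag that the Bulinskaya-type nondegeneracy must cover not only $\nabla F_\infty\neq 0$ on $Z_K(F_\infty)$ but also that the $F_\infty$-probability of the discontinuity set of $\ell_K$ (e.g., tangency to $\partial K$) is zero, so that the Mann--Wald continuous mapping theorem applies; the paper's Lemma~\ref{lem.nondeg} states only the interior nondegeneracy, and the boundary issue is left implicit.
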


In comparison with the recent work \cite{iksanov2016local} which rather uses complex analysis and Hurwitz Theorem, we actually show that the sole $C^1$-convergence is enough to ensure local universality.  Besides, even if stated here in dimension two, our result holds in any finite dimension. Nevertheless, in \cite{iksanov2016local}, a much wider class of distributions is considered, englobing domains of attraction of stable distributions. The article \cite{azais2015local} provides local universality for some families of absolutely continuous distributions which is an unecessary assumption but actually entails the stronger result that all moments converge towards the corresponding moments of the (moment determined) target.

\medskip

From the above local universality result, and provided explicit moment controls, we can then deduce the following global universality result, which states that, properly nomalized, the expectation of the length of the full nodal set in the square $[0,\pi]^2$, converges to a universal constant.

\begin{thm}[Global universality, Theorem \ref{theo.final} below]\label{theo.main}
Whatever the law of the entries $(a_{k,l})_{k,l\ge 1}$,  as $n$ tends to infinity, we have 
\[
\lim_{n \to +\infty} \frac{\mathbb E[ \ell_{[0,\pi]^2}(f_n) ]}{n} = \frac{\pi^2}{2 \sqrt{3}}.
\]
\end{thm}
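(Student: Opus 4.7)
The plan is to leverage the local universality result (Theorem~\ref{theo.main1}) via a tiling of the macroscopic square, combined with uniform integrability of the local nodal lengths, and then to identify the limiting constant by a Kac--Rice computation on the universal limit field. First, the rescaling built into the definition of $F_n$ gives
\[
\frac{\mathbb{E}[\ell_{[0,\pi]^2}(f_n)]}{n} \;=\; \frac{1}{n^2}\,\mathbb{E}\bigl[\ell_{[0,n\pi]^2}(F_n)\bigr],
\]
and I would tile $[0,n\pi]^2$ into the $n^2$ axis-parallel squares $K_{j,k} = [j\pi,(j+1)\pi]\times[k\pi,(k+1)\pi]$ for $0 \le j,k \le n-1$. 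Setting $F_n^{(j,k)}(u,v) := F_n(u+j\pi,v+k\pi) = \tfrac{1}{n}f_n\bigl(x_0 + \tfrac{u}{n},\, y_0 + \tfrac{v}{n}\bigr)$ with $(x_0,y_0) := (j\pi/n,k\pi/n)$, one has $\ell_{K_{j,k}}(F_n) = \ell_{[0,\pi]^2}(F_n^{(j,k)})$, so the target becomes a Riemann sum over the macroscopic positions $(x_0,y_0) \in [0,\pi]^2$.

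Next, for any macroscopic position with $x_0,y_0 \notin \pi\mathbb{Z}$, expanding the products $\cos(p(x_0+u/n))\cos(p(x_0+u'/n))$ by the product-to-sum formula and isolating the diagonal and anti-diagonal contributions, a standard Dirichlet-kernel bound (or Weyl equidistribution) shows that the oscillating term involving $\cos(2px_0 + p(u+u')/n)$ vanishes as $n\to\infty$. Hence the covariance kernel of $F_n^{(j,k)}$ converges locally uniformly to the stationary, factorised sinc kernel
\[
\mathcal{C}\bigl((u,v),(u',v')\bigr) \;=\; \frac{1}{4}\,\frac{\sin(u-u')}{u-u'}\,\frac{\sin(v-v')}{v-v'},
\]
independently of $(x_0,y_0)$. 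Combining CLT-type finite-dimensional convergence with tightness in $C^1$, and invoking byproduct~(i) of the paper (continuity of nodal volumes with respect to $C^1$-convergence), one deduces that $\ell_{[0,\pi]^2}(F_n^{(j,k)})$ converges in distribution to $\ell_{[0,\pi]^2}(F_\infty)$, where $F_\infty$ is the universal centered Gaussian field with covariance $\mathcal{C}$, and this limit is insensitive to the law of the entries.

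To pass from distributional convergence to convergence of expectations, I would invoke byproduct~(ii) of the abstract, namely the small-ball estimates and uniform local controls of nodal volumes, which should yield a bound of the form $\sup_{n,(j,k)} \mathbb{E}\bigl[\ell_{[0,\pi]^2}(F_n^{(j,k)})^{1+\delta}\bigr] < \infty$ for some $\delta>0$, and hence uniform integrability. The exceptional macroscopic positions with $x_0$ or $y_0 \in \pi\mathbb{Z}$ form a Lebesgue-negligible set and contribute nothing to the Riemann sum, so that
\[
\lim_{n\to\infty}\frac{\mathbb{E}[\ell_{[0,\pi]^2}(f_n)]}{n} \;=\; \mathbb{E}\bigl[\ell_{[0,\pi]^2}(F_\infty)\bigr].
\]
The right-hand side is finally computed by Kac--Rice on the Gaussian field $F_\infty$: at any point, $F_\infty$, $\partial_u F_\infty$ and $\partial_v F_\infty$ are mutually independent centered Gaussian with variances $1/4$, $1/12$, $1/12$ (read off from $\mathcal{C}$ via $\sin(t)/t = 1 - t^2/6 + O(t^4)$), so the local nodal density equals $\tfrac{1}{2}\sqrt{(1/12)/(1/4)} = \tfrac{1}{2\sqrt{3}}$ uniformly on $[0,\pi]^2$, yielding the announced constant $\pi^2/(2\sqrt{3})$. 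The main obstacle is the uniform integrability step: since the coefficients $a_{k,l}$ may be discrete, Kac--Rice is not directly available at the level of $f_n$, and the new small-ball machinery is precisely what closes this gap.
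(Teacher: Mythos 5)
Your proposal follows essentially the same strategy as the paper: tiling of $[0,n\pi]^2$, convergence of the shifted fields to the stationary sinc-kernel Gaussian field, and uniform moment control via the small-ball estimates to pass to expectations. The one genuine variation is how the constant $\pi^2/(2\sqrt{3})$ is identified. You propose computing $\mathbb{E}[\ell_{[0,\pi]^2}(G_\infty)]$ directly by Kac--Rice on the stationary limit field $G_\infty$ (with variances $1/4$, $1/12$, $1/12$ for the field and its two partials, yielding density $1/(2\sqrt{3})$), which is correct. The paper instead introduces an intermediate reference: in Theorem~\ref{Casgaussien} it applies Kac--Rice to the Gaussian polynomial $f_n$ itself (using the explicit sums $A_n,B_n,C_n$ of Lemmas~\ref{asympt}--\ref{asympto}) to obtain $\mathbb{E}[\ell_{[0,\pi]^2}(f_n)]\sim (2n+1)\pi^2/(4\sqrt{3})$, then argues that the general-entries and Gaussian-entries averages are both asymptotic to $\tfrac{1}{n^2}\sum\mathbb{E}[l_{\infty,k,l}]$ after $\varepsilon$-truncation, whence the constant transfers. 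Your route is arguably more direct since it avoids Theorem~\ref{Casgaussien} entirely; the paper's route is slightly more robust in that it never has to evaluate the limit field's expected nodal length. One imprecision in your write-up: the troublesome positions are not only those with $x_0\in\pi\mathbb{Z}$ but the whole near-boundary region where $\mathrm{dist}(j/n,\mathbb{Z})$ is small and the Dirichlet-kernel bound degrades, so the paper introduces the truncation to $\mathcal{A}_{n,\varepsilon}=(n[\varepsilon,1-\varepsilon]\cap\mathbb{N})^2$ and kills the complementary contribution by the uniform moment bound~(\ref{Moment-unif}); your ``Lebesgue-negligible'' remark should really be this $\varepsilon$-cutoff. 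A second small point: for the extraction argument behind the uniform convergence in~(\ref{Bound2}) the paper uses Propositions~\ref{pro.unif} and~\ref{cvgce-Finfini} together, comparing both $l_{n,p_n,q_n}$ \emph{and} $l_{\infty,p_n,q_n}$ to the same $G_\infty$-limit, whereas you only track the first; this is a minor omission since you go straight to $G_\infty$, but be aware that the stationary field is $G_\infty$, not the non-stationary $F_\infty$ of Proposition~\ref{conv.C1}.
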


\begin{rem}
Due to the symmetry and periodicity of the trigonometric polynomials $f_n$, we have then 
$\lim_{n \to +\infty} n^{-1} \mathbb E[ \ell_{[0,2\pi]^2}(f_n) ]= 2\pi^2/\sqrt{3}$,
and our proof actually establishes that for any compact set $K$ being a finite union of rectangles:
\[
\lim_{n \to +\infty} \frac{\mathbb E[ \ell_{K}(f_n) ]}{n} = \frac{\text{Vol}(K)}{2 \sqrt{3}}.
\]
With a standard approximation procedure, one can then deduce that the latter convergence holds for any compact set $K$ with non empty interior and smooth boundary. 
\end{rem}

\begin{figure}[ht]
\begin{center}\includegraphics[scale=0.4]{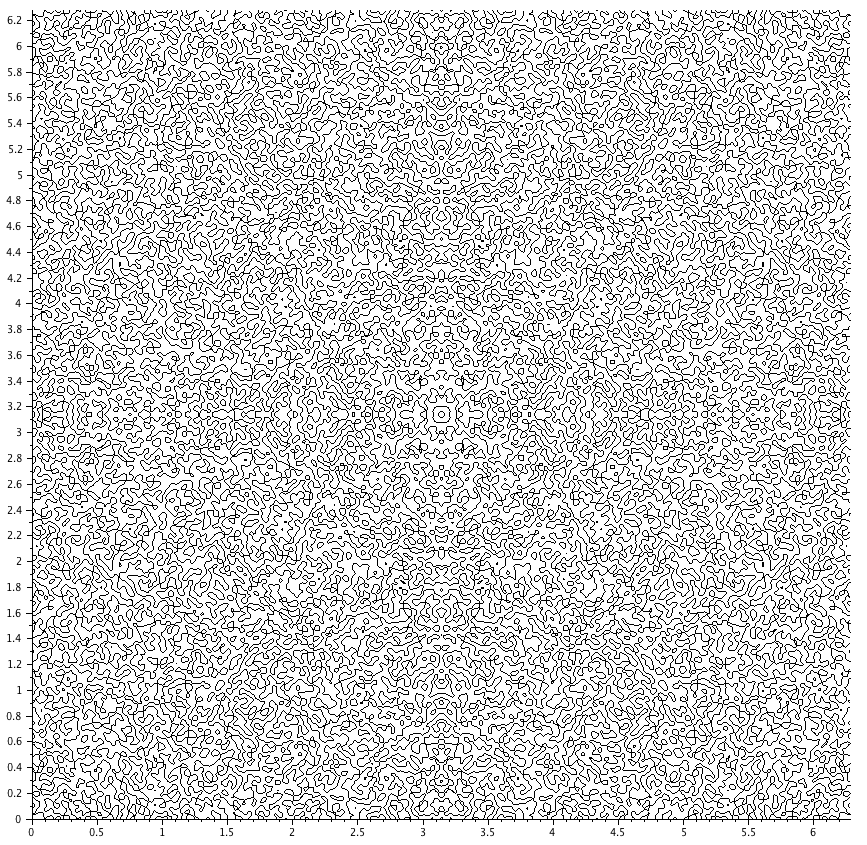}\end{center}
\caption{A realization of the nodal set $Z_{[0,2\pi]^2}(f_n)$ for a trigonometric polynomial of degree $n=100$ and with symmetric Bernoulli coefficients.}
\end{figure}

\begin{rem}\label{rem.nonstat}
By choosing the trigonometric polynomials $f_n$ of the form given by Equation \eqref{def.poly}, we deliberately choose to work in a non-stationnary framework. Let us stress here that our methods and results naturally extend to stationnary cases, for instance when the trigonometric polynomials are of the form
\[
\sum_{1\le k,l \le n} a_{k,l} \cos(kx+ly) + b_{k,l} \sin( kx+ly), 
\]
where $a_{k,l}$ and $b_{k,l}$ are independant i.i.d sequences and where the computations are actually simpler than the ones considered here. 
\end{rem}

Before giving the plan of the paper, let us say few words concerning the universality of the mean number of real roots of univariate random trigonometric polynomials. It has been recently established in full generality in \cite{flasche2016expected}, and in \cite{angst2015universality} under more restrictive conditions on the coefficients but with some possible control of the remainder in terms of Edgeworth expansions. The strategy of the proof in \cite{flasche2016expected} artfully combines a careful investigation of the number of changes of signs together with accurate small ball estimates obtained by adapting to this framework the method of Ibragimov and Maslova \cite{ibragimov1971expected}. Nevertheless such a strategy faces intricate obstructions in higher dimensions, first of all, investigating the number of changes of sign is not anymore suitable. Secondly, relying on the celebrated Crofton formula, one might try to get back to the univariate case by studying only the zeros of our bivariate polynomials when restricted to random lines. However, such projections are not anymore polynomials when the lines have an irrational slope. In order to avoid such heavy complications, we follow here a completely different path which consists of establishing first the local universality and next extending it to global universality via accurate controls of moments of of local nodal lengths. These controls rely on suitable small ball estimates which do not follow the Ibragimov-Maslova method, which seemed to us hard to adapt here, but instead  exploit the particular ergodic properties of sequences of type $\{k x\}_{k \ge 1} \,\,\text{mod}(\pi)$.

\medskip

The plan of the paper is the following. The next Section \ref{Loc} is devoted to the proof of Theorem \ref{theo.main1} concerning local universality. 
Its first Subsection \ref{sec.C1} is dedicated to the $C^1$-convergence of the rescaled trigonometric polynomials $F_n$ towards a non-degenerate Gaussian field, whereas Subsection \ref{sec.continuity} deals with the (deterministic) continuity of the volumes of nodal domains with respect to $C^1$-convergence on compact sets. The last two results are combined in Subsection \ref{Locuniv} to deduce the announced microscopic universality.
The proof of Theorem \ref{theo.main} on global universality is then given in Section \ref{sec.global}. More precisely, Subsection \ref{sec.gaussian} deals with the Gaussian case, where an exact computation of the nodal length can be performed thanks to the celebrated Kac--Rice formula.   
Then, in Subsection \ref{sec.moment}, we derive a  small ball estimate, from which we deduce a uniform moment control of the local lengths. Together with the local universality, this moment control allow us to conclude in Subsection \ref{lafin}.
For the sake of clarity, we give below a concise view of our proof strategy.

\begin{figure}[ht]
\begin{center}
\includegraphics[scale=0.45]{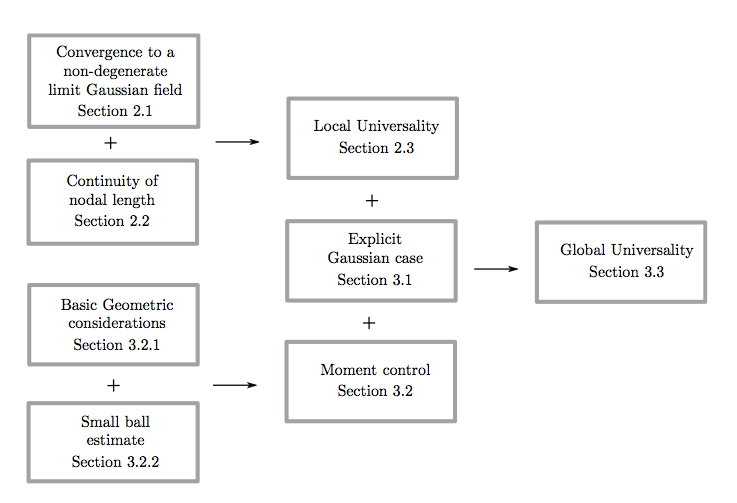}
\end{center}
\vspace{-1cm}
\caption{Plan of the proof of Local/Global Universality.}
\end{figure}

\section{Local universality} \label{Loc}
 In this section, we give a detailed proof of Theorem \ref{theo.main1} on the local universality of the nodal length, i.e. we show that, at the microscopic scale, the law of the nodal length of the bivariate random trigonometric polynomials converges to a universal limit as their degree tends to infinity, regardless of the particular law of their coefficients.

\subsection{A limit Gaussian field} \label{sec.C1}
Let us first remark that, up to a scale factor, the set of zeros of the original random trigonometric polynomial $f_n$ defined by Equation \eqref{def.poly} naturally identifies with the set of zeros of its rescaled analogue $F_n$ defined by Equation \eqref{def.poly2}.  
But the advantage of considering the function $F_n$ instead of $f_n$ is that, for any fixed compact $K \subset \mathbb R^2$ and as $n$ goes to infinity, the random field $(F_n(x,y))_{(x,y) \in K}$ converges in law, with respect to the $C^1$ topology, to an explicit smooth Gaussian field $(F_{\infty}(x,y))_{(x,y) \in K}$.

\begin{prop} \label{conv.C1}
For any fixed compact $K \subset \mathbb R^2$, as $n$ goes to infinity, the random field $(F_n(x,y))_{(x,y) \in K}$ converges with respect to the $C^1$ topology on $K$ to a smooth, centered Gaussian field $(F_{\infty}(x,y))_{(x,y) \in K}$ whose covariation is given by
\[
\begin{array}{ll}
\mathbb E[F_{\infty}(x,y)F_{\infty}(x',y')] & = \displaystyle{\int_0^1 \int_0^1 \cos( s x) \cos(s x') \cos( t y) \cos(t y') ds dt }\\
\\
& = \displaystyle{ \frac{1}{4} \left( \sin_c(x+x') +  \sin_c(x-x') \right)\left( \sin_c(y+y') +  \sin_c(y-y') \right)},
\end{array}
\]
where $\sin_c(x) := \sin(x)/x$ if $x \neq 0$ and $\sin_c(0):=1$ by convention.
\end{prop}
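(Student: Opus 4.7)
\textit{Step 1 (limit covariance).} By independence and the $\E[a_{k,\ell}^2]=1$ normalization,
\[
\E[F_n(x,y) F_n(x',y')] = \Bigl(\tfrac{1}{n}\sum_{k=1}^n \cos(kx/n)\cos(kx'/n)\Bigr)\Bigl(\tfrac{1}{n}\sum_{\ell=1}^n \cos(\ell y/n)\cos(\ell y'/n)\Bigr),
\]
so each factor is a Riemann sum on $[0,1]$ converging uniformly on compact sets of the remaining arguments to the corresponding one-dimensional integral. Product-to-sum linearization of $\cos(sx)\cos(sx')$ and direct integration then produce the claimed $\mathrm{sinc}$ expression. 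The same computation, together with the observation that differentiating $\cos(kx/n)$ only multiplies the summand by $-k/n\in[-1,1]$, yields the limit covariances for all partial derivatives of $F_n$ up to second order.

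\medskip

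\textit{Step 2 (finite-dimensional CLT).} For any finite family $z_i\in K$, multi-indices $|\alpha_i|\le 1$ and scalars $\lambda_i$, the linear combination $\sum_i \lambda_i\,\partial^{\alpha_i} F_n(z_i)$ rewrites as $\sum_{k,\ell} a_{k,\ell}\,c^{(n)}_{k,\ell}$ with deterministic coefficients satisfying $|c^{(n)}_{k,\ell}|\le C/n$ uniformly; its variance converges by Step 1 while each individual contribution is uniformly $o(1)$, so Lindeberg's condition holds trivially. The multivariate CLT combined with the Cram\'er--Wold device then yields joint finite-dimensional convergence of $(F_n,\partial_x F_n,\partial_y F_n)$ to the centered Gaussian field $F_\infty$ with the prescribed covariance kernel.

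\medskip

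\textit{Step 3 (tightness in $C^1(K)$: the main obstacle).} Under the mere assumption $\E[a_{k,\ell}^2]<\infty$, Kolmogorov--Chentsov with large moments is not directly available, and this is the principal difficulty. The remedy is a truncation argument. Write $a_{k,\ell}=a^{\le M}_{k,\ell}+a^{>M}_{k,\ell}$ as a sum of two independent centered parts, the first bounded by $2M$ and the second of $L^2$-norm $\sigma_M\to 0$ as $M\to\infty$, and decompose accordingly $F_n=F_n^{\le M}+F_n^{>M}$. For the bounded part, Rosenthal's inequality applied to the independent-sum representation of $\partial^\alpha F_n^{\le M}(z)-\partial^\alpha F_n^{\le M}(z')$, together with the Lipschitz estimate $|c^{(n)}_{k,\ell}(z)-c^{(n)}_{k,\ell}(z')|=O(|z-z'|/n)$ and the total quadratic variation $O(|z-z'|^2)$, yields $\E[|\cdot|^p]\le C(M,p)|z-z'|^p$ for every $p>2$, hence Kolmogorov--Chentsov tightness in $C^1(K)$ uniformly in $n$. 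For the residual, a direct computation gives $\E[\|F_n^{>M}\|_{H^s(K)}^2]\lesssim \sigma_M^2$ uniformly in $n$ for every $s\ge 0$, so Sobolev embedding ($s>2$ in dimension $2$) yields $\|F_n^{>M}\|_{C^1(K)}\to 0$ in $L^2$, hence in probability, as $M\to\infty$, uniformly in $n$. Combining both parts provides the required tightness.

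\medskip

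Steps 2 and 3 together imply convergence in law in $C^1(K)$, and Step 2 identifies $F_\infty$ uniquely as the centered Gaussian field with the stated covariance; the smoothness of this covariance in $(x,y,x',y')$ then guarantees that $F_\infty$ admits a $C^\infty$ modification.
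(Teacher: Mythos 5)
Your covariance computation (Step 1) and finite-dimensional CLT via Lindeberg--Cram\'er--Wold (Step 2) match the paper's argument exactly. Where you diverge is in the tightness step, and the divergence is genuine. The paper invokes the $C^1$-convergence criterion of Rusakov and Seleznjev (Theorem 2 and Remarks 2--3 of \cite{rusakov}), which is tailored to sums of independent smooth random functions and only requires uniform $L^2$-Lipschitz bounds on the increments of $F_n$, $\partial_1 F_n$, $\partial_2 F_n$ — precisely the three estimates $D_n, D_n^1, D_n^2 \le 2\|(x,y)-(x',y')\|^2$ the paper records. By contrast, you do not assume that criterion and instead reconstruct tightness from first principles: truncate the entries into a bounded part and a small-variance residual; for the bounded part apply Rosenthal to upgrade the second-moment Lipschitz increments to $p$-th moment increments $\lesssim_{M,p} |z-z'|^p$ for $p>2$, whence Kolmogorov--Chentsov gives uniform $C^1$ tightness; for the residual, observe that every partial derivative of $F_n^{>M}$ has $L^2$-norm $O(\sigma_M)$ uniformly in $n$, so the Sobolev embedding $H^s\hookrightarrow C^1$ (with $s>2$) drives $\|F_n^{>M}\|_{C^1}$ to zero in probability uniformly in $n$. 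Both arguments are correct. The paper's route is shorter but opaque unless one unpacks the cited criterion (whose proof itself rests on L\'evy/Ottaviani-type maximal inequalities for independent sums, which is what lets it get away with only second moments); your route is longer but self-contained, uses only classical tools, and makes explicit how finite variance alone suffices. One small caveat worth making explicit in your Step 3: the Sobolev estimate should be stated on a fixed bounded Lipschitz domain containing $K$, since $K$ is an arbitrary compact and the embedding constant depends on the domain; enlarging $K$ to a rectangle handles this harmlessly.
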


\begin{proof}
We use here the characterization of the $C^1$-convergence given in Theorem 2 and Remarks 2 and 3 of \cite{rusakov}. The convergence of finite dimensional marginals is a direct consequence of the standard central limit theorem for independent, non identically distributed random variables. 
The covariance function of the limit is obtained as the limit of the two-dimensional Riemann sums
\[
\begin{array}{ll}
\mathbb E[F_{n}(x,y)F_{n}(x',y')] & = \displaystyle{\frac{1}{n^2}\sum_{1\leq k, \ell, \leq n}  \cos \left( \frac{k x}{n}\right)\cos \left( \frac{\ell y }{n}\right)\cos \left( \frac{k x'}{n}\right)\cos \left( \frac{\ell y' }{n}\right)} .
\end{array}
\]
Moreover, if $\partial_1$ and $\partial_2$ denote the partial derivatives in the $x$ and $y$ components, and if we set 
$D_n :=\mathbb E \left[ | F_n(x,y)-F_n(x',y') |^2 \right]$, $D_n^1 :=\mathbb E \left[ | \partial_1 F_n(x,y)-\partial_1 F_n(x',y') |^2 \right]$ and 
$D_n^2 :=\mathbb E \left[ | \partial_2 F_n(x,y)-\partial_2 F_n(x',y') |^2 \right]$, for all $(x,y), (x',y') \in \mathbb R^2$
we have 
\[
\begin{array}{rl}
\displaystyle{ D_n}   =& \displaystyle{\frac{1}{n^2}\sum_{1\leq k, \ell, \leq n} \left| \cos \left( \frac{k x}{n}\right)\cos \left( \frac{\ell y }{n}\right)-\cos \left( \frac{k x'}{n}\right)\cos \left( \frac{\ell y' }{n}\right)\right|^2}\\
\\
& \leq \displaystyle{\frac{2}{n^2}\sum_{1\leq k, \ell, \leq n} \left| \cos \left( \frac{k x}{n}\right)-\cos \left( \frac{k x'}{n}\right)\right|^2 + \left| \cos \left( \frac{\ell y }{n}\right)-\cos \left( \frac{\ell y' }{n}\right)\right|^2} \\
\\
& \leq \displaystyle{\left( \frac{2}{n}\sum_{1\leq k \leq  n} \left( \frac{k}{n}\right)^2 \right) || (x,y)-(x',y')||^2 \leq 2  \, || (x,y)-(x',y')||^2}.
\end{array}
\] 
In the same way,  we have
\[
\begin{array}{rl}
\displaystyle{ D_n^1}   =& \displaystyle{\frac{1}{n^2}\sum_{1\leq k, \ell, \leq n} \frac{k^2}{n^2} \left| \sin \left( \frac{k x}{n}\right)\cos \left( \frac{\ell y }{n}\right)-\sin \left( \frac{k x'}{n}\right)\cos \left( \frac{\ell y' }{n}\right)\right|^2}\\
\\
& \leq \displaystyle{\frac{2}{n^2}\sum_{1\leq k, \ell, \leq n} \frac{k^2}{n^2}  \left( \left| \sin \left( \frac{k x}{n}\right)-\sin \left( \frac{k x'}{n}\right)\right|^2 + \left| \cos \left( \frac{\ell y }{n}\right)-\cos \left( \frac{\ell y' }{n}\right)\right|^2\right)} \\
\\
& \leq \displaystyle{\left( \frac{2}{n}\sum_{1\leq k \leq  n} \frac{k^4}{n^4}\right)  || (x,y)-(x',y')||^2 \leq 2  \, || (x,y)-(x',y')||^2},
\end{array}
\] 
and the exact same computation yields $D^2_n \leq 2\,  || (x,y)-(x',y')||^2$. Together with the convergence of finite dimensional marginals, the three last estimates provide the desired tighness criterion ensuring the convergence in the $C^1$ topology.
\end{proof}

As noticed in Remark \ref{rem.nonstat} in the introduction, we consider here random trigonometric polynomials in a non-stationary framework. To be able to deal with this non-stationarity in our approach of global universality at the end of the paper, we need to slightly reinforce the above convergence result, by establishing a kind of uniformity in space. This is the object of the next proposition.

\begin{prop}\label{pro.unif}
For any  $0<a<b<1$ and any sequence of couples of integers $(p_n,q_n)$ in the square $[a n, b n]^2$, the stochastic process $(G_n(x,y))_{(x,y)\in [0,\pi]^2}$ defined by 
\[
G_n(x,y):=F_n(p_n \pi+x,q_n \pi+y), \;\; (x,y)\in [0,\pi]^2,
\]
converges in distribution, as $n$ goes to infinity, in the space $C^1([0,\pi]^2)$ towards a stationary Gaussian field $G_{\infty}$ of covariation $\rho((x,y),(x',y')):=\frac 1 4 \sin_c(x-x')
\sin_c(y-y')$. 
\end{prop}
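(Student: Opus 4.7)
The plan is to closely mirror the proof of Proposition \ref{conv.C1}, with the only new ingredient being the analysis of the rapidly oscillating terms introduced by the translations $(p_n \pi, q_n \pi)$. Since these translations are by integer multiples of $\pi$, they do not give cosine a periodic simplification (cosine has period $2\pi$), so a nontrivial cancellation argument is required. I would proceed in three steps: convergence of the covariance, convergence of finite-dimensional marginals, and $C^1$-tightness.

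For the covariance, by independence of the indices $k$ and $\ell$, the quantity $\mathbb{E}[G_n(x,y) G_n(x',y')]$ factorizes as a product of two sums of the form $\frac{1}{n}\sum_{k=1}^n \cos(k(p_n \pi + x)/n)\cos(k(p_n \pi + x')/n)$. I would apply the product-to-sum identity $2\cos A \cos B = \cos(A-B) + \cos(A+B)$ to split this into a ``diagonal'' part $\tfrac{1}{2n}\sum_k \cos(k(x-x')/n)$, which is a Riemann sum converging to $\tfrac{1}{2}\mathrm{sin}_c(x-x')$, plus an ``oscillating'' part $\tfrac{1}{2n}\sum_k \cos(k \theta_n)$ with $\theta_n = (2 p_n \pi + x + x')/n$. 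Summing the geometric series yields
\[
\frac{1}{n}\sum_{k=1}^{n} e^{ik\theta_n} = \frac{e^{i\theta_n}(1-e^{in\theta_n})}{n(1-e^{i\theta_n})} = \frac{e^{i\theta_n}(1-e^{i(x+x')})}{n(1-e^{i\theta_n})},
\]
using $e^{i n \theta_n} = e^{i(x+x')}$ since $p_n$ is an integer. The key point where the hypothesis $p_n/n \in [a,b] \subset (0,1)$ enters is that $\theta_n$ stays bounded away from multiples of $2\pi$ for $n$ large, so $|1 - e^{i\theta_n}|$ is bounded below by a positive constant; hence this oscillating contribution is $O(1/n)$ and vanishes. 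Doing the analogous computation in $\ell$, I obtain exactly $\tfrac{1}{4}\mathrm{sin}_c(x-x')\mathrm{sin}_c(y-y')$ in the limit.

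Convergence of finite-dimensional marginals then follows from the Lindeberg central limit theorem applied to the triangular array of independent summands $a_{k,\ell}\cos(k(p_n\pi + x)/n)\cos(\ell(q_n\pi + y)/n)$; the Lindeberg condition is immediate because the deterministic factors are uniformly bounded by $1$ and the variance is of order $1/n^2$. For the $C^1$-tightness, I would verify the Rusakov-type moment criterion used in Proposition \ref{conv.C1} on the increments of $G_n$ and its two partial derivatives. The bounds go through unchanged: each cosine difference $\cos(k(p_n\pi+x)/n) - \cos(k(p_n\pi+x')/n)$ is controlled by $(k/n)|x-x'|$ purely via the Lipschitz property, so the translations by $p_n\pi, q_n\pi$ drop out of the estimates, and one recovers $D_n, D_n^1, D_n^2 \leq 2\,\|(x,y)-(x',y')\|^2$ exactly as before.

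The main obstacle is really just the cancellation argument for the oscillating Riemann-type sum above: it is essential that $p_n/n$ lies in a compact subinterval of $(0,1)$, otherwise $\theta_n$ could accumulate at $0$ or $2\pi$, the denominator $1-e^{i\theta_n}$ could be of order $1/n$, and the oscillating term would no longer vanish (in which case the limit would cease to be stationary, acquiring back the extra $\mathrm{sin}_c(x+x')\mathrm{sin}_c(y+y')$ term present in Proposition \ref{conv.C1}). Everything else is a routine adaptation of the preceding proof.
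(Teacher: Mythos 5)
Your proposal is correct and follows essentially the same route as the paper: factor the covariance by independence in $k$ and $\ell$, apply the product-to-sum identity, identify the Riemann sum giving $\tfrac12\sin_c(x-x')$, and show the oscillating sum is $O(1/n)$ because $p_n/n\in[a,b]\subset(0,1)$ keeps $2p_n\pi/n$ bounded away from multiples of $2\pi$ (the paper states this via the Dirichlet-kernel bound $\frac{1}{n}\bigl|\sum_{k\le n}\cos(k\alpha)\bigr|\le \frac{1}{n|\sin(\alpha/2)|}$, which is what your geometric-series computation produces), while the $C^1$-tightness estimates are unchanged since $p_n,q_n$ drop out of the increment bounds.
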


\begin{proof}
First of all, the tightness criterion used in the proof of Proposition \ref{conv.C1} applies in the same way since the final bound is expressed only in terms of $\|(x,y)-(x',y')\|_2^2$, so that $p_n$ and $q_n$ play no role here. Thus, one is only left to consider the convergence of the covariations. Setting 
\[
\rho_n(x,x',p):= \frac 1 n \sum_{1\le k \le n} \cos\left(\frac{k}{n}(x+p\pi)\right)\cos\left(\frac{k}{n}(x'+p\pi)\right),
\]
we have 
$
\E\left[F_n(p_n\pi+x,q_n\pi+y)F_n(p_n\pi+x',q_n\pi+y')\right] = \rho_n(x,x',p_n) \rho_n(y,y',q_n).
$
By symmetry, it is enough to investigate the first factor, which can be rewritten as
\[
 \rho_n(x,x',p_n) =\frac 1 {2n} \sum_{1\le k \le n} \cos\left(\frac{k}{n}(x+x'+2 p_n\pi)\right)+\frac 1 {2n} \sum_{1\le k \le n} \cos\left(\frac{k}{n}(x-x')\right).
\]
The second term is a Riemann sum converging to the desired sinus cardinal, whereas the first sum is managed by a direct computation to obtain the inequality
\begin{eqnarray*}
\frac{1}{n}\left|\sum_{1\le k \le n} \cos\left(\frac{k}{n}(x+x'+2 p_n\pi)\right)\right| \le \frac{1}{n}\frac{1}{\left|\sin\left(\frac{x+x'}{2n}+\frac{p_n\pi}{n}\right)\right|}.
\end{eqnarray*}
The right hand side of this last equation goes to zero as $n$ goes to infinity. 
Indeed, on the one hand $(x+x')/2n$ goes to zero as $n$ goes to infinity, whereas on the other hand, $\text{dist}\left(p_n/n,\mathbb{Z}\right)$ remains uniformly bounded from below, hence the result.
\end{proof}

Using the same arguments, one can moreover establish the following convergence result which will also be used at the end of proof of the global universality.
\begin{prop}\label{cvgce-Finfini}
Let $(p_n,q_n)$ be couple of integers as in Proposition \ref{pro.unif}, then the process $F_\infty(p_n \pi+\cdot,q_n \pi+\cdot)$ converges in distribution in the $C^1$ topology towards $G_\infty$. 
\end{prop}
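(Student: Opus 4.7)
The plan is to follow the same two-step strategy as in Proposition \ref{pro.unif}, simply replacing the prelimit $F_n$ by the Gaussian field $F_\infty$ itself. First, I would deduce convergence of finite-dimensional marginals from a direct computation of the covariance of the translated field; then, I would establish tightness in $C^1([0,\pi]^2)$ by invoking the Rusakov criterion of \cite{rusakov} already used in Proposition \ref{conv.C1}.

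For the covariance step, set $\tilde F_n(x,y) := F_\infty(p_n\pi+x,\, q_n\pi+y)$. Plugging this shift into the closed form from Proposition \ref{conv.C1} yields
\[
\mathbb E[\tilde F_n(x,y)\tilde F_n(x',y')] = \frac{1}{4} \bigl(\sin_c(x+x'+2p_n\pi) + \sin_c(x-x')\bigr)\bigl(\sin_c(y+y'+2q_n\pi) + \sin_c(y-y')\bigr).
\]
Since $p_n,q_n \ge an$ and $|\sin_c(z)| \le 1/|z|$, the terms $\sin_c(x+x'+2p_n\pi)$ and $\sin_c(y+y'+2q_n\pi)$ vanish as $n\to\infty$, uniformly for $(x,y,x',y')\in [0,\pi]^4$. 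What remains is exactly $\rho((x,y),(x',y'))$, and as all processes are centered Gaussian, convergence of covariances forces convergence of every finite-dimensional marginal.

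For the $C^1$-tightness, I would bound the second moments of the increments of $\tilde F_n$ and its two partial derivatives by $C\|(x,y)-(x',y')\|^2$, uniformly in $n$. The cleanest route relies on the integral representation $F_\infty(u,v) = \int_{[0,1]^2} \cos(su)\cos(tv)\, dW(s,t)$ for a Gaussian white noise $W$ on $[0,1]^2$, whose existence follows from the product form of the covariance given in Proposition \ref{conv.C1}. Each such second moment then becomes an $L^2([0,1]^2)$ integral of a trigonometric difference, weighted by $s^a t^b$ with $a,b\in\{0,2\}$; since cosine and sine are $1$-Lipschitz, the shifts $p_n\pi$ and $q_n\pi$ appear only inside the arguments of $\cos$ or $\sin$ and drop out of the Lipschitz bound, reducing the estimates to exactly the ones performed for $D_n$, $D_n^1$, $D_n^2$ in the proof of Proposition \ref{conv.C1}, which are manifestly independent of the shifts. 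The only conceptual point, and thus the main (mild) obstacle, is to note that $p_n\pi, q_n\pi$ play a dual role: at the level of covariances they kill the resonance terms and produce the stationary limit $\rho$, whereas for the modulus-of-continuity estimates they are absorbed by the Lipschitz property of $\cos$, so no further obstruction arises.
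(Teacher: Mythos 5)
Your proof is correct and carries out precisely the two-step argument the paper alludes to with ``Using the same arguments'': covariance convergence via the decay of the resonance terms $\sin_c(x+x'+2p_n\pi)$ and $\sin_c(y+y'+2q_n\pi)$ (using $|\sin_c(z)|\le 1/|z|$ and $p_n,q_n\ge an$), together with uniform $C^1$-tightness from Lipschitz bounds on the increments. The explicit Wiener-integral representation $F_\infty(u,v)=\int_{[0,1]^2}\cos(su)\cos(tv)\,dW(s,t)$, justified by the product form of the covariance in Proposition~\ref{conv.C1}, is a clean device to port the estimates on $D_n$, $D_n^1$, $D_n^2$ to the limit field and see that the shifts drop out.
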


Let us go back to the convergence of the random field $(F_{n}(x,y))_{(x,y) \in K}$ in a fixed compact $K \subset \mathbb R^2$ and establish that the limit Gaussian field $(F_{\infty}(x,y))_{(x,y) \in K}$ is non-degenerate in the following sense.
\begin{lem}\label{lem.nondeg}
The limit Gaussian field $F_{\infty}$ obtained in Proposition \ref{conv.C1} is non-degenerate in the sense that almost surely, we have 
\[
\nabla_{(x,y)} F_{\infty} \neq 0, \;\; \hbox{whenever} \,\, F_{\infty}(x,y)=0. 
\]
\end{lem}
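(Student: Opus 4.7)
The plan is to invoke Bulinskaya's lemma for the $C^\infty$ Gaussian map $X(x,y) := (F_\infty(x,y), \partial_1 F_\infty(x,y), \partial_2 F_\infty(x,y))$ from $K \subset \mathbb{R}^2$ into $\mathbb{R}^3$: since the target dimension strictly exceeds the source dimension, the map $X$ almost surely avoids the origin, provided the density of $X(x,y)$ at $0$ stays uniformly bounded on the pieces of $K$ considered. The key preliminary ingredient is the spectral representation which can be read off the covariance formula of Proposition \ref{conv.C1}, namely
\[ F_\infty(x,y) = \int_0^1 \int_0^1 \cos(sx)\cos(ty)\, dW(s,t), \]
for some standard Gaussian white noise $W$ on $[0,1]^2$. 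Differentiating under the stochastic integral yields analogous explicit formulas for $\partial_1 F_\infty$ and $\partial_2 F_\infty$ in terms of the same $W$, from which one obtains both the $C^\infty$ regularity of $F_\infty$ and a concrete handle on the joint covariance structure of $X$.

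First I would check non-degeneracy of the covariance matrix of $X(x,y)$ on the open set $\{xy \neq 0\}$. By It\^o's isometry, a linear relation $\alpha F_\infty + \beta \partial_1 F_\infty + \gamma \partial_2 F_\infty = 0$ at the point $(x,y)$ is equivalent to the pointwise identity
\[ \alpha \cos(sx)\cos(ty) - \beta \, s\sin(sx)\cos(ty) - \gamma \, t\cos(sx)\sin(ty) = 0, \quad (s,t) \in [0,1]^2. \]
Evaluating at $(s,t) = (0,0)$ forces $\alpha = 0$; evaluating along $t = 0$ gives $\beta \, s \sin(sx) \equiv 0$, hence $\beta = 0$ as soon as $x \neq 0$; symmetrically $\gamma = 0$ as soon as $y \neq 0$. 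The determinant of the covariance matrix of $X(x,y)$ is therefore continuous and strictly positive on $\{xy \neq 0\}$, and thus uniformly bounded away from zero on any compact piece $K_\epsilon := K \cap \{|x| \geq \epsilon,\, |y| \geq \epsilon\}$. Applying Bulinskaya's lemma to $X$ restricted to each $K_\epsilon$ and taking a countable union over $\epsilon = 1/n$, almost surely $X$ does not vanish anywhere on $K \cap \{xy \neq 0\}$.

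The main remaining obstacle, and the only subtle point, is that the limit field genuinely satisfies $\partial_1 F_\infty(0, y) \equiv 0$ (since the spectral integrand involves $s\sin(0) = 0$), and symmetrically $\partial_2 F_\infty(x, 0) \equiv 0$, so the $3 \times 3$ covariance matrix is singular on the coordinate axes and the preceding argument does not apply there. I would handle this degeneracy by reducing to a one-dimensional Bulinskaya argument on each axis: the restriction $g(y) := F_\infty(0, y)$ is a smooth centered Gaussian process whose derivative is $\partial_2 F_\infty(0, y)$, and the same pointwise evaluation trick shows that the pair $(g(y), g'(y))$ is jointly non-degenerate for every $y \neq 0$. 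A one-dimensional Bulinskaya argument on compact subsets of $K \cap \{x = 0, y \neq 0\}$ then ensures that, almost surely, $g(y) = 0$ implies $g'(y) \neq 0$. The symmetric argument treats $K \cap \{y = 0\}$; finally, if $(0,0) \in K$, the variable $F_\infty(0,0)$ is a centered non-degenerate Gaussian of variance $1$, hence non-zero almost surely. Combining all cases yields the announced non-degeneracy.
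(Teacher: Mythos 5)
Your proposal is correct and follows essentially the same path as the paper: invoke Bulinskaya off the coordinate axes after checking that the covariance matrix of $(F_\infty,\partial_1 F_\infty,\partial_2 F_\infty)$ is a nonsingular Gram matrix (equivalently, the spectral kernels are linearly independent in $L^2([0,1]^2)$), then treat the axes $\{x=0\}$ and $\{y=0\}$ by a one-dimensional Bulinskaya argument for the pair $(F_\infty,\partial_y F_\infty)$ (resp. $(F_\infty,\partial_x F_\infty)$), and handle the origin by noting $F_\infty(0,0)$ is a standard Gaussian. The paper phrases the Gram determinant in terms of proportionality of the three spectral functions and identifies $F_\infty(0,\cdot)$ with the scaling limit of a univariate random trigonometric polynomial, whereas you derive the same facts via the white-noise representation and an explicit pointwise evaluation argument, but the substance and the decomposition into off-axis/axis/origin cases are identical.
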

\begin{proof} Let us denote by $A:=\{x =0\} \cup \{y =0\}$ the axes of $\mathbb R^2$ and consider a compact set $K \subset \mathbb R^2 \backslash A$. The fact that the field $F_{\infty}$ is non-degenerate on $K$ is a consequence of Bulinskaya Lemma, see e.g. Proposition 6.11 of \cite{azais}. The only delicate point to check is that the Gaussian vector 
$V=(F_{\infty} , \partial_1 F_{\infty} , \partial_2 F_{\infty} )$ admits a uniformly bounded density on $K$. A necessary and sufficient condition ensuring this fact is that the determinant of the covariance matrix $\Gamma_V$ of $V$ is stricly positive on the compact $K$, and thus uniformly bounded from below. The covariance matrix $\Gamma_V$ of $V$ is a Gram matrix namely if $\langle ,  \rangle$ denotes the standard Hilbert scalar product in $L^2([0,1])$, we have
\[
\Gamma_V = \left( \begin{array}{lll} 
\langle f,f \rangle  & \langle f,g \rangle & \langle f,h \rangle  \\
\langle f,g \rangle  & \langle g,g \rangle & \langle g,h \rangle  \\
\langle f,h \rangle  & \langle g,h \rangle & \langle h,h \rangle  
\end{array} \right) ,
\]
 where 
 \[
 f(s):= \cos( s x) \cos(sy) , \quad  g(s):= -s \sin( s x) \cos(sy), \quad h(s):= -s \cos( s x) \sin(sy).
 \]
 The determinant of this Gram matrix vanishes if and only if the above functions of $s$ are proportional, which only occurs on the axes $\{x =0\}$ or $\{y=0\}$, hence the result. Let us now consider the case of the axes. Let us first remark that the random variable $F_{\infty}(0,0)$ is a standard Gaussian variable so that $F_{\infty}(0,0)\neq 0$ almost surely. Next, on the axis $\{x =0, y \neq 0\} $, the limit process $(F_{\infty}(0,y))_{y \in \mathbb R}$ is nothing but the limit Gaussian process associated to the univariate trigonometic polynomials 
\[
F_n(0,y)= \frac{1}{\sqrt{n}} \sum_{1\leq  \ell \leq n} b_{ \ell} \cos \left( \frac{\ell y }{n}\right),
\]
where the variables $b_{\ell} = \frac{1}{\sqrt{n}} \sum_{k=1}^n a_{k , \ell}$ are independent and identically distributed, their common law being centered and with unit variance. As above, the covariance matrix of  $(F_{\infty}(0,y), \partial_y F_{\infty}(0,y))$ is also a Gram matrix whose determinant only vanishes at the origin, hence is unifomly bounded from below on any compact set of $\{x =0, y \neq 0\} $. Naturally the same reasoning holds on the set 
$\{y=0, x \neq 0\}$.
\end{proof}

\begin{rem}\label{rem.nondege}
Note that the above arguments actually also ensure the non-degeneracy of the stationary limit field $G_{\infty}$ appearing in Propositions \ref{pro.unif} and \ref{cvgce-Finfini}.
\end{rem}

\subsection{Continuity of the nodal length}\label{sec.continuity}
In this section, we establish that the functional that associates to a function $f : \mathbb R^2 \to \mathbb R$ the length of its nodal set, or more generally  its $d-1$ dimensional volume if  $f : \mathbb R^{d} \to \mathbb R$, is continuous with respect to the $C^1$ topology on compact sets. 
Let us be more precise and consider the space $E:=C^1(\mathbb R^d, \mathbb R)$ endowed with the $C^1$ topology associated to the family of semi-norms $|| \cdot ||_K$ :
\[
||f||_K := \sup_K \left(  |f | + \sum_{i=1}^d |\partial_i f|\right), \; \hbox{$K$ compact subset of $\mathbb R^d$}.
\]
Given such a compact $K \subset \mathbb R^d$, we will say that $f \in E$ is non-degenerate on $K$ if 
\[
\nabla_x  f \neq 0  \quad \text{whenever} \quad x \in Z_K(f).
\]
If $A \subset \mathbb R^d$ is a measurable set, we will denote by $H_{d-1}(A)$ with values in $[0, +\infty]$ its $(d-1)$--dimensional Hausdorff measure, so that the object of interest here is the continuity in $f$ of the nodal volume $v_K(f) := H_{d-1} (Z_K(f))$.

\begin{thm}\label{thm.conti}
Let $K \subset \mathbb R^d$ be a compact set and let $(f_n)_{n \geq 1}$ be a sequence of functions in $E$ which converges to a function $f \in E$ in the $C^1$ topology on $K$. If $f$ is non-degenerate on $K$, then the volumes $v_K(f)$ and $v_K(f_n)$, $n$ sufficiently large, are finite and we have
\[
\lim_{n \to +\infty} v_K(f_n) =v_K(f).
\]
\end{thm}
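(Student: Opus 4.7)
I would deduce the result from the \emph{coarea formula}, after establishing a uniform non-degeneracy of $f$ and of the $f_n$ on a neighbourhood of the nodal set. For $h \in E$ and a continuous Dirac approximation $\chi_\epsilon$ supported in $[-\epsilon,\epsilon]$ with $\int\chi_\epsilon=1$, set
\[
\widetilde V_\epsilon(h) := \int_K \chi_\epsilon(h(x))\,|\nabla h(x)|\,dx.
\]
Since $Z_K(f)$ is compact and $|\nabla f|>0$ on it, there exist $\delta_0,c>0$ with $|\nabla f|\ge 2c$ on the tube $T:=\{x\in K:|f(x)|\le\delta_0\}$. The $C^1$-convergence then provides $n_0$ such that, for $n\ge n_0$, $\|f_n-f\|_K<\delta_0/2$ and $\|\nabla f_n-\nabla f\|_K<c$; consequently $Z_K(f_n)\subset T$ and $|\nabla f_n|\ge c$ on $T$, so each $f_n$ is non-degenerate on $K$ too.

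\textbf{Coarea reduction.} For $h\in\{f\}\cup\{f_n: n\ge n_0\}$ and $\epsilon<\delta_0/2$ one has $\{|h|\le\epsilon\}\cap K\subset T$, so by the coarea formula
\[
\widetilde V_\epsilon(h) = \int_{-\epsilon}^{\epsilon}\chi_\epsilon(t)\,H_{d-1}(\{h=t\}\cap K)\,dt.
\]
The central claim is that $t\mapsto H_{d-1}(\{h=t\}\cap K)$ is continuous at $t=0$, with a modulus of continuity depending only on $c$, $\delta_0$ and $M:=\sup_{n\ge n_0}\|f_n\|_{C^1(T)}$. Granted this, $H_{d-1}(\{h=0\}\cap K)=v_K(h)$ is finite and $\widetilde V_\epsilon(h)\to v_K(h)$ as $\epsilon\to 0$, uniformly in $h\in\{f\}\cup\{f_n:n\ge n_0\}$.

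\textbf{Local parametrisation.} The uniform continuity would be obtained via the implicit function theorem. At each $x_0\in T$, since $|\nabla h(x_0)|\ge c$ some coordinate satisfies $|\partial_j h(x_0)|\ge c/\sqrt{d}$; the bound $M$ on $\|h\|_{C^1(T)}$ makes this inequality persist on a ball $B(x_0,r)$ with $r=r(c,M)>0$ independent of $h$. On such a ball, for $|t|$ small, IFT represents $\{h=t\}\cap B(x_0,r)$ as the $C^1$-graph of a function $g(\cdot,t)$ depending continuously on $t$ in $C^1$-norm, with quantitative bounds controlled by $c$ and $M$; the corresponding local $(d-1)$-volume $\int\sqrt{1+|\nabla g(\cdot,t)|^2}$ then varies continuously in $t$. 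Covering $T$ by finitely many such balls (cardinality bounded uniformly in $h$) and gluing via a fixed partition of unity produces the required uniform modulus of continuity, hence $\sup_{h}|\widetilde V_\epsilon(h)-v_K(h)|\to 0$ as $\epsilon\to 0$, the supremum being taken over $\{f\}\cup\{f_n:n\ge n_0\}$.

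\textbf{Conclusion and main obstacle.} The triangle inequality
\[
|v_K(f_n)-v_K(f)| \le |v_K(f_n)-\widetilde V_\epsilon(f_n)| + |\widetilde V_\epsilon(f_n)-\widetilde V_\epsilon(f)| + |\widetilde V_\epsilon(f)-v_K(f)|,
\]
together with the (easier) fact that $\widetilde V_\epsilon$ is $C^1$-continuous for fixed $\epsilon$ (by bounded convergence on the finite-measure compact $K$), then concludes: for any $\eta>0$, pick $\epsilon$ small to bound the two outer terms by $\eta/3$ uniformly in $n\ge n_0$, and then $n$ large to bound the middle term. I expect the main obstacle to be the uniform continuity in the coarea step, namely producing a quantitative version of IFT whose constants depend only on $c$ and $M$, so that the cover of $T$ and its subordinate partition of unity can be chosen independently of $h$. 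This uniformity is precisely what legitimates the interchange of the limits $n\to\infty$ and $\epsilon\to 0$.
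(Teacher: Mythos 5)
Your proposal takes a genuinely different route from the paper's. The paper does not smooth the indicator or invoke the coarea formula: it covers the compact set $Z_K(f)$ by finitely many rectangles and applies the implicit function theorem in the \emph{product} space $E \times \mathbb R^d$ to the evaluation map $F(h,x)=h(x)$ at each zero $(f,x_j)$, obtaining for every $h$ close to $f$ a graph representation $x^{\sigma(x_j)}=X_j(h,\check x^{\sigma(x_j)})$ where $X_j$ is jointly $C^1$ in the pair $(h,y)$. Both $v_K(f)$ and $v_K(f_n)$ ($n$ large) are then written as the \emph{same} inclusion--exclusion sum of graph volumes $\int_{E_J}\sqrt{1+|\nabla X_{j_1}(h,y)|^2}\,dy$ over fixed base sets $E_J$, so the conclusion follows from $\nabla X_{j_1}(f_n,\cdot)\to\nabla X_{j_1}(f,\cdot)$ uniformly on $E_J$, which is automatic since $X_{j_1}$ is $C^1$ in $h$. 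This sidesteps exactly what you flag as your main obstacle: by letting IFT carry the dependence on the function $h$ as an infinite-dimensional variable, the paper never needs a quantitative version of IFT with constants uniform in $h$, whereas the coarea route must establish a uniform-in-$h$ modulus of continuity for $t\mapsto H_{d-1}(\{h=t\}\cap K)$ in order to interchange $n\to\infty$ and $\epsilon\to 0$. Your plan is sound in principle (and the coarea formulation might generalize more flexibly), but it is considerably heavier machinery here, and the uniform quantitative IFT step is real work that the paper simply avoids.

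One caveat shared by both arguments: the graph/level-set volume computations tacitly assume the covering neighbourhoods sit inside $K$, i.e.\ that $Z_K(f)$ does not degenerate at $\partial K$. Without such an assumption the statement is literally false: take $d=2$, $K=\{x^2+y^2\le 1,\ x\le 0\}$, $f(x,y)=x$, $f_n(x,y)=x-1/n$; then $f$ is non-degenerate, $f_n\to f$ in $C^1(K)$, yet $v_K(f_n)=0$ for all $n$ while $v_K(f)=2$, so your "central claim" of continuity of $t\mapsto H_{d-1}(\{h=t\}\cap K)$ at $t=0$ also fails in this example. This is a gap in the paper's own proof as much as in yours, so it does not count against your proposal; but it is worth keeping in mind that the theorem needs either $Z_K(f)\subset K^\circ$ or transversality to $\partial K$, which holds almost surely in the paper's random-field application.
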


\begin{proof}[Proof of Theorem \ref{thm.conti}]
We first need to introduce some notations.
For a non-degenerate function $f$, we denote by  $\sigma(x)=\sigma_f(x)$ the index of the first non vanishing component of the gradient at $x$, namely
\[
\sigma(x)=\sigma_f(x):=\inf\{1\leq i \leq d, \, \partial_i f(x) \neq 0\}.
\]
If $x=(x^1, \ldots, x^d)$ and $1\leq i \leq d$, we will write 
\[
\pi_i(x) = \check{x}^i:=(x^1, \dots, x^{i-1}, x^{i+1}, \ldots, x^d).
\]
Finally, if $y \in \mathbb R^d$ and $\delta, \varepsilon>0$, $R^i(y,\delta,\varepsilon)$ will denote the following open rectangle
\[
R^i(y,\delta,\varepsilon):=\{x \in \mathbb R^d, |x^i - y^i|  < \delta, |x^{\ell}- y^{\ell} | < \varepsilon, \, 1 \leq \ell \leq d, \, \ell \neq i\}.
\]
Let us first prove the following lemma, which ensures that under the hypotheses of Theorem \ref{thm.conti} and for $n$ sufficiently large, the zeros of $f_n$ are located in a neighborhood of the zeros of $f$. 

\begin{lem} \label{lem.conti}
Let $(f_n)_{n \geq 1}$ be a sequence of functions in $E$ which converges to a function $f \in E$ with respect to the $C^1$ topology on the compact $K$. 
For all $\varepsilon>0$ and for $n$ sufficiently large, we have 
\[
Z_K(f_n) \subset Z_K(f,\varepsilon) := \{ x \in \mathbb R^d, d(x, Z_K(f)) \leq \varepsilon \}.
\]
\end{lem}

\begin{proof}[Proof of Lemma \ref{lem.conti}]
By contradiction, let us suppose that there exists $\varepsilon>0$ such that  for all $N \geq 1$, there exists $n \geq N$ and $x_n \in Z_K(f_n)$ such that $d(x_n, Z_K(f))> \varepsilon$. Since the sequence $(x_n)_{n \geq 1}$ takes values in the compact set $K$, one could then extract a converging subsequence $(x_{n_k})_{k \geq 1}$, converging to some $x_{\infty} \in K$ with  $d(x_{\infty}, Z_K(f))\geq \varepsilon$. But 
\[
\begin{array}{ll}
|f(x_{\infty})| & = \displaystyle{|f(x_{\infty})-f_{n_k}(x_{n_k})| = |f(x_{\infty})-f_{n_k}(x_{\infty}) + f_{n_k}(x_{\infty})-f_{n_k}(x_{n_k})|} \\
\\
& \leq \displaystyle{ \sup_{x \in K} | f(x) - f_{n_k}(x) | + \sup_{x \in K} | f_{n_k}'(x) | \times |x_{\infty} -x_{n_k}|  },
\end{array}
\]
which would go to zero as $k$ goes to infinity because $f_n$ converges to $f$ in the $C^1$ topology on $K$, hence the contradiction between the two assertions $f(x_{\infty})=0$ and $d(x_{\infty}, Z_K(f))\geq \varepsilon$.
\end{proof}

Let us go back to the proof of Theorem \ref{thm.conti} and consider the evaluation application from $ E \times \mathbb R^d$ to $\mathbb R$ defined by 
\[
F(h,x):=h(x).
\]
By hypothesis, since the function $f$ is non-degenerate on $K$, if $x_0=(x_0^1, \ldots, x_0^d) \in Z_K(f)$, we have $F(f,x_0)=0$ and there exists $1\leq i=\sigma_f(x_0) \leq d$ such that $\partial_{x^i} F$  is invertible at $(f,x_0)$. By the implicit function theorem, there exists $\varepsilon_0>0$, $\delta_0>0$ and a  function $X_0 : E \times \mathbb R \to \mathbb R$ of class $C^1$ such that 
\begin{equation}\label{eq.1}
h(x) = 0 \; \Longleftrightarrow \; x^i=X_0(h,\check{x}^i) \;\;\text{for all} \; \left \lbrace \begin{array}{l} x \in R^i(x_0,2\delta_0,2\varepsilon_0), \\ || h-f|| < 2\varepsilon_0. \end{array} \right.
\end{equation}
From the covering of the compact nodal set $Z_K(f)$ by the union of open sets of the type $R^i(x_0,\delta_0,\varepsilon_0)$, one can extract a finite covering. Namely there exists a positive integer $m$ and for all $1 \leq j \leq m$, there exists $x_j \in Z_K(f)$ as well as $\varepsilon_j>0$ and $\delta_j>0$ such that
\begin{equation}\label{eq.recouvre}
Z_K(f) \subset  \bigcup_{j=1}^m V_j, \quad \hbox{where} \quad V_j:=   R^{\sigma(x_j)}(x_j,\delta_j,\varepsilon_j).
\end{equation}
For all $1\leq j \leq m$, if $k=\sigma_f(x_j)$, we have a similar identification to the one given by Equation \eqref{eq.1}, namely in a neighborhood of $(f,x_j)$
\begin{equation}\label{eq.2}
h(x)=0 \Longleftrightarrow x^k=X_j(h,\check{x}^k), \;\;\text{for all} \; \left \lbrace \begin{array}{l} x \in B^k(x_j,2\delta_j,2\varepsilon_j), \\ || h-f|| < 2\varepsilon_j, \end{array} \right.
\end{equation}
where the application $X_j : E \times \mathbb R \to \mathbb R$ is of class $C^1$.
In particular, setting $h=f$, we get that
if $J=\{j_1, \ldots, j_r\} \subset \{1, \ldots, m\}$ and $\bigcap_{j \in J} V_j \neq \emptyset$, the intersection
\[
\Gamma_{J}= Z_K(f) \bigcap \left( \bigcap_{j \in J}  \overline{V}_{j}\right)
\] 
identifies with a parametrized hypersurface whose finite volume is given by the classical formula
\begin{equation}\label{eq.5}
H_{d-1}(\Gamma_{J}) = \displaystyle{\int_{E_J} \sqrt{1+ |\nabla X_{j_1}(f,y)|^2}dy, }
\end{equation}
where the integration is performed on the compact rectangle
\[
E_J := \pi_{j_1} \left( \bigcap_{j \in J} \overline{V}_{j} \right).
\]
Taking care of the overlapping, the finite total volume of the nodal set is then given by the celebrated Poincar\'e formula
\begin{equation}\label{eq.length0}
v_K(f)  = \sum_{\emptyset \neq J \subset \{1, \ldots, m\}} (-1)^{|J|} H_{d-1}(\Gamma_J).
\end{equation}

\begin{figure}[ht]\label{fig.3}
\begin{center}
\includegraphics[scale=0.35]{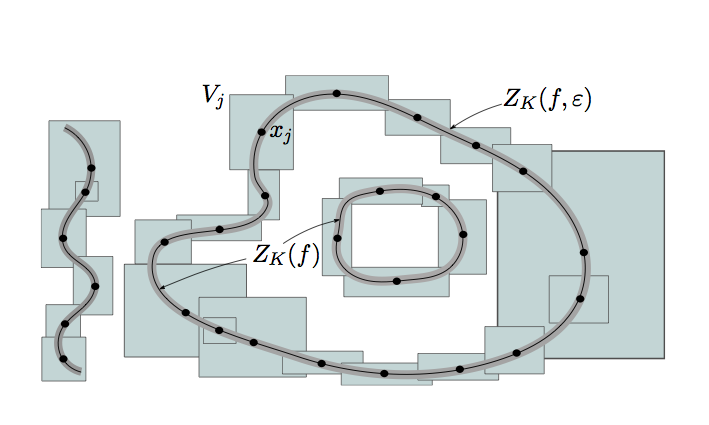}
\end{center}
\caption{Finite covering of the compact nodal set.} 
\end{figure}

Let us now emphasize the fact that in Equation \eqref{eq.recouvre}, the union not only contains the nodal set $Z_K(f)$, but there exists $\varepsilon>0$ small enough such that this union contains a $\varepsilon-$neighborhood of the latter :
\[
Z_K(f,\varepsilon) \subset \bigcup_{j=1}^m V_j.
\]
By Lemma \ref{lem.conti}, we get that for $n$ large enough $Z_K(f_n) \subset Z_K(f, \varepsilon) \subset \bigcup_{i=j}^m V_j$ and thus
\[
Z_K(f_n) = \bigcup_{j=1}^m \left[ Z_K(f_n) \cap  \overline{V}_j  \right].
\]
From the equivalence \eqref{eq.2} given by the implicit function theorem, as above, we get that
if $J=\{j_1, \ldots, j_r\} \subset \{1, \ldots, m\}$ and $\bigcap_{j \in J} V_j \neq \emptyset$, the intersection
\[
\Gamma_{J}^n= Z_K(f_n) \bigcap \left( \bigcap_{j \in J}  \overline{V}_{j}\right)
\] 
also identifies with a parametrized hypersurface whose volume is given by 
\begin{equation}\label{eq.6}
H_{d-1}(\Gamma_{J}^n) = \displaystyle{\int_{E_J} \sqrt{1+ |\nabla X_{j_1}(f_n,y)|^2}dy.}
\end{equation}
By the Poincar\'e formula, we have similarly
\begin{equation}\label{eq.length1}
v_K(f_n)  = \sum_{\emptyset \neq J \subset \{1, \ldots, m\}} (-1)^{|J|} H_{d-1}(\Gamma_J^n),
\end{equation}
so that, comparing to Equation \eqref{eq.length0}, we get
\[
\left| v_K(f)-v_K(f_n) \right| \leq \sum_{\emptyset \neq J \subset \{1, \ldots, m\}}  | H_{d-1}(\Gamma_{J})-H_{d-1}(\Gamma_{J}^n)|.
\]
The right hand side of the last equation goes to zero as $n$ goes to infinity because, from Equations \eqref{eq.5} and \eqref{eq.6}, for any non-empty subset $J=(j_1,\ldots, j_r)$ of $\{1, \ldots, m\}$, we have 
\[
\left| H_{d-1}(\Gamma_{J})- H_{d-1}(\Gamma_{J}^n) \right| \leq  \displaystyle{\int_{E_J} \left| \sqrt{1+ |\nabla X_{j_1}(f,y)|^2}- \sqrt{1+ |\nabla X_{j_1}(f_n,y)|^2}\right|dy, }
\]
and  the difference $\nabla X_{j_1}(f,y) - \nabla X_{j_1}(f_n,y)$ goes to zero uniformly on $E_J$, since the function $X_{j_1}$ is $C^1$ and since the sequence $f_n$ converges to $f$ in the $C^1$ topology on $K$.

\end{proof}

\subsection{Local universality}\label{Locuniv}
Let $K \subset \mathbb R^2$ a compact set. 
Combining  Proposition \ref{conv.C1} and Lemma \ref{lem.nondeg}, we get that as $n$ goes to infinity, the field $(F_n(x,y))_{x \in K}$ converges with respect to the $C^1$ topology on $K$ to a the non-degenerate limit field $(F_{\infty}(x,y))_{x \in K}$.  The announced local universality result is then a direct consequence of the continuous mapping theorem together with the continuity of the nodal length established in Theorem \ref{thm.conti}. 

\begin{thm}\label{theo.local}
Let $K \subset \mathbb R^2$ a compact set, then as $n$ goes to infinity, the length $\ell_K(F_n)$ of the nodal set converges in distribution to $\ell_K(F_{\infty})$.
\end{thm}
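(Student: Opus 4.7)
The plan is to assemble the three preceding results into a direct application of the continuous mapping theorem. Proposition \ref{conv.C1} supplies the convergence in distribution $F_n \Rightarrow F_\infty$ in the Polish space $C^1(K)$. Theorem \ref{thm.conti} says that the functional
\[
\Phi : C^1(K) \to [0,+\infty], \qquad \Phi(f) := \ell_K(f) = H_1(Z_K(f)),
\]
is continuous at every $f \in C^1(K)$ which is non-degenerate on $K$. Lemma \ref{lem.nondeg} provides the missing piece, namely that $F_\infty$ is almost surely non-degenerate on $K$, so $\Phi$ is continuous on a set of full $P_{F_\infty}$-measure.

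Given these three ingredients, the continuous mapping theorem applies directly and yields $\Phi(F_n) \Rightarrow \Phi(F_\infty)$, i.e.\ $\ell_K(F_n) \Rightarrow \ell_K(F_\infty)$. Equivalently, one can invoke Skorokhod's representation theorem to realize the $C^1$-convergence almost surely on a common probability space, and then apply Theorem \ref{thm.conti} pathwise on the full-measure event where $F_\infty$ is non-degenerate.

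There is really no further obstacle at this stage: all the substantive work (the $C^1$ tightness, the identification of the Gaussian limit, the Bulinskaya-type non-degeneracy argument, and above all the $C^1$-continuity of the nodal volume) has been carried out in the previous subsections. The present theorem is just the packaging of those statements through the continuous mapping theorem.
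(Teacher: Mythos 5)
Your proof is correct and follows exactly the paper's argument: Proposition \ref{conv.C1} gives $C^1$-convergence in distribution, Lemma \ref{lem.nondeg} guarantees almost sure non-degeneracy of $F_\infty$, Theorem \ref{thm.conti} gives continuity of $\ell_K$ at non-degenerate functions, and the continuous mapping theorem (applied on the full-measure continuity set of the functional) concludes. The Skorokhod-representation rephrasing you offer is an equivalent packaging of the same reasoning.
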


\section{Global universality}\label{sec.global}

We now turn to the proof of Theorem \ref{theo.main} on the universality of the mean nodal length at the macroscopic level.

\subsection{The Gaussian case} \label{sec.gaussian}
In this section, we consider the Gaussian case, namely we assume that all the  entries $a_{k,l}$ are independent standard Gaussian variables. In this situation, the expectation of the nodal length $\ell_K(f_n)$ can be explicitely computed thanks to celebrated Kac--Rice formula, since both $f_n$ and its derivative have explicit densities.  
\begin{lem}\label{asympt} 
For $(x,y) \in \mathbb R^2$, the Gaussian vector  $(f_n(x,y), \frac{\partial f_n}{\partial x}((x,y), \frac{\partial f_n}{\partial y}((x,y))$ is centered with explicit covariance $\Sigma=(\Sigma_{i j})_{1 \leq i,j\leq 3}$  given by 
\[
\begin{array}{lll}
\Sigma_{11}=A_n(x)A_n(y),  & \Sigma_{22}= C_n(x)A_n(y), &  \Sigma_{33}=A_n(x)C_n(y) ,\\
\Sigma_{12}=-B_n(x)A_n(y), &  \Sigma_{13}= -A_n(x)B_n(y), & \Sigma_{23}= B_n(x)B_n(y),\\
\end{array}
\]
where
\[ 
A_n(\centerdot):=  \sum_{1\le k \le n} \cos^2(k\centerdot),\;\; 
B_n(\centerdot):=  \sum_{1\le k \le n} k\sin(k\centerdot)\cos(k\centerdot), \;\; 
 C_n(\centerdot):=  \sum_{1\le k \le n} k^2\sin^2(k\centerdot). 
\]
\end{lem}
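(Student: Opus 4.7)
The statement is a pure computation once we recognize that all three components are linear combinations of the i.i.d. standard Gaussians $(a_{k,l})$, and therefore form a centered jointly Gaussian vector. My plan would proceed in three short steps.

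First, I would compute the two partial derivatives explicitly by term-by-term differentiation:
\[
\partial_x f_n(x,y) = -\sum_{1 \le k,l \le n} a_{k,l}\, k\sin(kx)\cos(ly), \qquad \partial_y f_n(x,y) = -\sum_{1 \le k,l \le n} a_{k,l}\, l\cos(kx)\sin(ly).
\]
Since a finite linear combination of centered Gaussians is centered Gaussian, the vector $(f_n,\partial_x f_n,\partial_y f_n)(x,y)$ is indeed centered Gaussian, so only the six covariance entries remain to be identified.

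Second, I would use the fact that the $a_{k,l}$ are independent with unit variance, so for any two linear forms $\sum a_{k,l}\alpha_{k,l}$ and $\sum a_{k,l}\beta_{k,l}$ the covariance equals $\sum_{k,l}\alpha_{k,l}\beta_{k,l}$. Each such double sum factorizes as a product of a sum over $k$ and a sum over $l$, because the coefficients always appear as a product of an $x$-factor depending only on $k$ and a $y$-factor depending only on $l$. For instance,
\[
\mathrm{Cov}(\partial_x f_n,\partial_y f_n) = \sum_{k,l} k\sin(kx)\cos(ly)\, l\cos(kx)\sin(ly) = \Bigl(\sum_k k\sin(kx)\cos(kx)\Bigr)\Bigl(\sum_l l\sin(ly)\cos(ly)\Bigr),
\]
which is $B_n(x)B_n(y)$.

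Third, I would run the same factorization for the remaining five entries. The three diagonal entries give $\sum_k\cos^2(kx)\cdot\sum_l\cos^2(ly)=A_n(x)A_n(y)$, then $C_n(x)A_n(y)$ and $A_n(x)C_n(y)$ respectively; the off-diagonal entries $\Sigma_{12}$ and $\Sigma_{13}$ pick up a single minus sign from the derivative and yield $-B_n(x)A_n(y)$ and $-A_n(x)B_n(y)$, while $\Sigma_{23}$ gets two minus signs that cancel. There is no real obstacle here, only the bookkeeping of signs and of which variable ($k$ versus $l$) carries the derivative factor; the fact that the covariance cleanly factors into an $x$-part and a $y$-part is a direct consequence of the product form $\cos(kx)\cos(ly)$ of the basis functions in \eqref{def.poly}.
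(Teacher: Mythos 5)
Your proof is correct and is essentially the only natural way to establish this lemma: differentiate term by term, use independence and unit variance of the $a_{k,l}$ to reduce each covariance to a double sum of coefficient products, and factor that sum into an $x$-part times a $y$-part. The paper treats this computation as routine and does not spell it out; your write-up fills that in accurately, with the correct sign bookkeeping in particular for $\Sigma_{12}$, $\Sigma_{13}$, and $\Sigma_{23}$.
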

Note that the sums $A_n$, $B_n$ and $C_n$ appearing in Lemma  \ref{asympt} can actually be written as simple combinations of trigonometric functions. 
For example, the next lemma can be found in \cite{wilkins}.
\begin{lem}\label{asympto} We have
\[
\begin{array}{ll}
\displaystyle{4A_n(x)} & =\displaystyle{(2n+1)g_0+g_1}, \\
\displaystyle{8B_n(x) } & =  \displaystyle{(2n + l)^2h_0  +  (2n +  l)h_1+h_2}, \\
\displaystyle{48C_n(x)} & = \displaystyle{ (2n+  l)^3 k_0  +  (2n +  l)^2 k_1+(2n+  l) k_2  +  k_3},
\end{array}
\]
where, setting  $z:= (2n+1)x$, and $f(x):=\mbox{csc}(x)-x^{-1}$, the functions $g_i$, $h_i$ and $k_i$ are defined as
\begin{align*}
g_0(x):=&1+z^{-1}\sin z, \; g_1(x)= -2+ f(x)\sin z,\\
h_0(x):= &-z^{-1}\cos z + z^{-2} \sin z, \; h_1(x)= -f(x) \cos z, \; h_2(x)=-f'(x)\sin z, \\
k_0(x):=&1-3z^{-1}\sin z -6z^{-2}\cos z +6z^{-3}\sin z,\\
k_1(x):=&-3f(x)\sin z, \; k_2(x)=6f'(x)\cos z -1, \; k_3(x)= 3f"(x)\sin z.
\end{align*}
\end{lem}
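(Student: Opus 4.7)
The plan is to reduce everything to the Dirichlet kernel identity
\[
D_n(x)\,:=\,\sum_{k=1}^n \cos(2kx)\,=\,\frac{\sin z}{2\sin x}-\frac 1 2,\qquad z:=(2n+1)x,
\]
and to its first two derivatives in $x$. Indeed, the double-angle formulas $\cos^2(kx)=\tfrac{1}{2}(1+\cos 2kx)$, $2\sin(kx)\cos(kx)=\sin 2kx$, $\sin^2(kx)=\tfrac{1}{2}(1-\cos 2kx)$, combined with $\sum_{k=1}^n k^2 = n(n+1)(2n+1)/6$, give at once
\[
A_n = \tfrac{n}{2}+\tfrac{1}{2}D_n,\qquad B_n = -\tfrac{1}{4}D_n',\qquad C_n = \tfrac{n(n+1)(2n+1)}{12}+\tfrac{1}{8}D_n'',
\]
so the entire task reduces to computing $D_n,D_n',D_n''$ and repackaging them as polynomials in $2n+1$ with the prescribed coefficients $g_i,h_i,k_i$.

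The algebraic engine is the splitting $\csc x = x^{-1}+f(x)$, which cleanly separates the pole at $x=0$ from its analytic remainder. Differentiating it once and twice produces the two further identities
\[
\frac{\cos x}{\sin^2 x}\,=\,\frac{1}{x^2}-f'(x),\qquad \frac{1+\cos^2 x}{\sin^3 x}\,=\,\frac{2}{x^3}+f''(x).
\]
Combined with the trivial observation $x^{-j}=(2n+1)^j z^{-j}$, these identities are exactly what is needed to read off the coefficients of the successive powers of $2n+1$.

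For $4A_n=(2n-1)+\sin z/\sin x$, substituting $\csc x = x^{-1}+f(x)$ and writing $\sin z/x=(2n+1)z^{-1}\sin z$ yields directly $(2n+1)g_0+g_1$. For $B_n$, differentiating $\sin z/\sin x$ once gives
\[
8B_n(x) = -\frac{(2n+1)\cos z}{\sin x}+\frac{\sin z \cos x}{\sin^2 x};
\]
inserting the identities for $\csc x$ and $\cos x/\sin^2 x$ and collecting powers of $(2n+1)$ produces $(2n+1)^2 h_0+(2n+1)h_1+h_2$. For $48C_n$, the polynomial part contributes $4n(n+1)(2n+1)=(2n+1)^3-(2n+1)$, which explains at the same time the constant $1$ inside $k_0$ and the constant $-1$ inside $k_2$; the remaining contribution $6D_n''$ is computed by one further differentiation and then rewritten with the three $\csc$-identities above.

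The only real difficulty is bookkeeping in this second-derivative step. After differentiation, $D_n''$ has three terms with denominators $\sin x,\,\sin^2 x,\,\sin^3 x$, and matching them to the four pieces $(2n+1)^j k_j$ requires the arithmetic identity $(2n+1)^2-1=4n(n+1)$ together with the Pythagorean substitution $\cos^2 x=1-\sin^2 x$ in the $\sin^3 x$ term to bring it into the form involving $1+\cos^2 x$. No new idea is needed beyond this mechanical verification; the result is classical and recorded in \cite{wilkins}.
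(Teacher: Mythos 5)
Your proposal is correct, and in fact the paper offers no proof of this lemma at all: it is simply quoted from \cite{wilkins} (note that the occurrences of $(2n+l)$ in the statement are typographical for $(2n+1)$, as your computation implicitly and correctly assumes). Your reduction via $A_n=\tfrac n2+\tfrac12 D_n$, $B_n=-\tfrac14 D_n'$, $C_n=\tfrac{n(n+1)(2n+1)}{12}+\tfrac18 D_n''$ with $D_n(x)=\frac{\sin z}{2\sin x}-\tfrac12$ is sound, and I checked the key identities $\cos x/\sin^2x=x^{-2}-f'(x)$ and $(1+\cos^2x)/\sin^3x=2x^{-3}+f''(x)$, as well as the splitting $4n(n+1)(2n+1)=(2n+1)^3-(2n+1)$ that accounts for the constants $1$ in $k_0$ and $-1$ in $k_2$; collecting powers of $2n+1$ then reproduces exactly the stated $g_i,h_i,k_i$. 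This gives a clean, self-contained derivation of a result the paper treats as a citation, so there is nothing to compare against on the paper's side; your argument is complete.
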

It is remarkable that, conditionally to the event $f_n=0$, the partial derivatives of $f_n$ are independent Gaussian random variables.
\begin{lem}\label{condidis} Given $f_n=0$, the conditional distribution of $\left( \frac{\partial f_n}{\partial x}, \frac{\partial f_n}{\partial y}\right)$ is
$$\left( \frac{\partial f_n}{\partial x}, \frac{\partial f_n}{\partial y}\right) \sim \mathcal{N}\left( 0, \frac{1}{\Sigma_{11}} \left(
\begin{array}{cc}
\Sigma_{11}\Sigma_{22}-\Sigma^2_{12} & 0 \\ 
0 & \Sigma_{11}\Sigma_{33}-\Sigma^2_{13}
\end{array} \right)\right).$$
\end{lem}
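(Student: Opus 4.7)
My plan is to derive the statement from the standard Gaussian conditioning formula applied to the joint vector $V=(f_n,\partial_x f_n,\partial_y f_n)$ whose covariance $\Sigma$ is explicitly given by Lemma \ref{asympt}. Writing $V=(X,Y)$ with $X=f_n$ and $Y=(\partial_x f_n,\partial_y f_n)$, the classical formula for the law of $Y$ conditional on $X=0$ yields a centered bivariate Gaussian with covariance
\[
\Sigma_{Y|X=0}=\begin{pmatrix}\Sigma_{22} & \Sigma_{23}\\ \Sigma_{23} & \Sigma_{33}\end{pmatrix}-\frac{1}{\Sigma_{11}}\begin{pmatrix}\Sigma_{12}^2 & \Sigma_{12}\Sigma_{13}\\ \Sigma_{12}\Sigma_{13} & \Sigma_{13}^2\end{pmatrix}.
\]
The diagonal entries immediately match the variances announced in the statement, so the only thing to verify is that the off-diagonal entry vanishes, i.e.\ that $\Sigma_{11}\Sigma_{23}=\Sigma_{12}\Sigma_{13}$.

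This identity is where the product (tensor) structure of $f_n$ in the variables $x$ and $y$ plays its role. Using the explicit expressions from Lemma \ref{asympt}, one computes
\[
\Sigma_{11}\Sigma_{23}=A_n(x)A_n(y)\,B_n(x)B_n(y)=\bigl(-B_n(x)A_n(y)\bigr)\bigl(-A_n(x)B_n(y)\bigr)=\Sigma_{12}\Sigma_{13},
\]
which establishes the vanishing of the conditional covariance between $\partial_x f_n$ and $\partial_y f_n$. Since joint Gaussianity plus zero covariance implies independence, this proves the claim.

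There is essentially no obstacle; the whole content of the lemma is the algebraic cancellation $\Sigma_{11}\Sigma_{23}=\Sigma_{12}\Sigma_{13}$, which is structural: it reflects that $f_n(x,y)=\sum a_{k,\ell}\cos(kx)\cos(\ell y)$ is separable, so that one-dimensional factors $A_n,B_n$ in $x$ and in $y$ appear symmetrically in all four relevant entries of $\Sigma$. Consequently I expect the proof to be a short, direct verification, and the main point is simply to make this conditional-decorrelation property explicit since it will be used to evaluate the Kac--Rice integrand in the subsequent global universality argument.
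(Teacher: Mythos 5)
Your proposal is correct and follows essentially the same route as the paper: apply the standard Gaussian conditioning (Schur complement) formula to $(f_n,\partial_x f_n,\partial_y f_n)$ and observe that the off-diagonal term of the conditional covariance vanishes because $\Sigma_{11}\Sigma_{23}-\Sigma_{12}\Sigma_{13}=0$. The only difference is that you spell out the algebraic cancellation $A_n(x)A_n(y)B_n(x)B_n(y)=\bigl(-B_n(x)A_n(y)\bigr)\bigl(-A_n(x)B_n(y)\bigr)$ explicitly using Lemma~\ref{asympt}, whereas the paper simply asserts the identity; your version is a welcome clarification.
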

\begin{proof}
Conditionally to the event $f_n=0$, the conditional covariance matrix $\Sigma_{f_n=0}$ of the gradiant vector $\nabla f_n=\left( \frac{\partial f_n}{\partial x}, \frac{\partial f_n}{\partial y}\right)$   is given by 
\[
\begin{array}{ll}
\Sigma_{f_n=0} & = \textrm{Var}(\nabla f_n)-\textrm{Cov}(\nabla f_n,f_n)[\textrm{Var}(\nabla f_n)]^{-1}[Cov(\nabla f_n,f_n)]^T \\
\\
& =\displaystyle{\frac{1}{\Sigma_{11}} \left(
\begin{array}{cc}
\Sigma_{11}\Sigma_{22}-\Sigma^2_{12} & \Sigma_{11}\Sigma_{23}-\Sigma_{12}\Sigma_{13} \\ 
\Sigma_{11}\Sigma_{23}-\Sigma_{12}\Sigma_{13} & \Sigma_{11}\Sigma_{33}-\Sigma^2_{13}
\end{array} \right).}
\end{array}
\]
The result thus follows from the fact that $\Sigma_{11}\Sigma_{23}-\Sigma_{12}\Sigma_{13}=0$.
\end{proof}

We are now in position to explicitely compute the expectation of the length of nodal curve associated to the random trigonometric polynomial $f_n(x,y)$. 

\begin{thm}\label{Casgaussien} Let $(a_{k,l})_{k,l\ge 1}$ be a sequence of independent standard, centered, Gaussian variables and consider the associated random trigonometric polynomial $f_n(x,y)$ defined by Equation \eqref{def.poly}. Then, as $n$ tends to infinity, we have
\[
\mathbb E[ \ell_{[0, \pi]^2}(f_n) ] \sim \frac{(2n+1) \pi^2}{4 \sqrt{3}}.
\]
\end{thm}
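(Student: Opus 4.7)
The plan is to apply the Kac--Rice formula directly. Since $f_n$ is Gaussian and, by Lemma \ref{asympt} and Lemma \ref{condidis}, the joint law of $(f_n(x,y),\nabla f_n(x,y))$ is explicit with $(\partial_x f_n,\partial_y f_n)$ conditionally independent given $f_n=0$, I obtain
\[
\mathbb E[\ell_{[0,\pi]^2}(f_n)]=\int_{[0,\pi]^2}\frac{\mathbb E\!\left[\sqrt{\sigma_1^2(x,y)X^2+\sigma_2^2(x,y)Y^2}\right]}{\sqrt{2\pi\,A_n(x)A_n(y)}}\,dx\,dy,
\]
where $X,Y$ are independent $\mathcal N(0,1)$ and, writing $D_n(t):=C_n(t)-B_n(t)^2/A_n(t)$, we have $\sigma_1^2=A_n(y)D_n(x)$ and $\sigma_2^2=A_n(x)D_n(y)$. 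Factoring $\sqrt{A_n(x)A_n(y)}$ inside the square root reduces the integrand to
\[
\frac{1}{\sqrt{2\pi}}\,\mathbb E\!\left[\sqrt{\tfrac{D_n(x)}{A_n(x)}X^2+\tfrac{D_n(y)}{A_n(y)}Y^2}\right].
\]

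Next I would carry out the pointwise asymptotic analysis of the ratio $D_n(x)/A_n(x)$ on $(0,\pi)$ using Lemma \ref{asympto}. For $x\in(0,\pi)$ fixed, $z=(2n+1)x\to\infty$, so every oscillating remainder $z^{-j}\sin z$, $z^{-j}\cos z$ tends to zero, and the dominant contributions give
\[
A_n(x)=\tfrac{2n+1}{4}+O(1),\quad B_n(x)=O(n),\quad C_n(x)=\tfrac{(2n+1)^3}{48}+O(n^2),
\]
hence $D_n(x)/A_n(x)=\tfrac{(2n+1)^2}{12}(1+o(1))$. Plugging in and using the elementary identity $\mathbb E[\sqrt{X^2+Y^2}]=\sqrt{\pi/2}$, the Kac--Rice integrand converges pointwise on $(0,\pi)^2$ to
\[
\frac{1}{\sqrt{2\pi}}\cdot\frac{2n+1}{2\sqrt{3}}\cdot\sqrt{\pi/2}\,(1+o(1))=\frac{2n+1}{4\sqrt{3}}(1+o(1)).
\]
Integrating over $[0,\pi]^2$ would then yield exactly $(2n+1)\pi^2/(4\sqrt 3)$.

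The main obstacle is the exchange of limit and integral: the asymptotics above are uniform only on compact subsets of $(0,\pi)^2$, whereas the remainders in Lemma \ref{asympto} do not decay near the axes $x\in\{0,\pi\}$ and $A_n$ can dip to values much smaller than $n$ on a thin strip of width $O(1/n)$ near these axes. To handle this I would split the square into a bulk $[\varepsilon,\pi-\varepsilon]^2$, on which Lemma \ref{asympto} delivers the asymptotics uniformly and dominated convergence applies with the crude bound
\[
\sqrt{\tfrac{D_n(x)}{A_n(x)}X^2+\tfrac{D_n(y)}{A_n(y)}Y^2}\leq \sqrt{\tfrac{D_n(x)}{A_n(x)}}|X|+\sqrt{\tfrac{D_n(y)}{A_n(y)}}|Y|,
\]
combined with the pointwise limit $D_n/A_n\sim(2n+1)^2/12$, and a boundary strip whose area is $O(\varepsilon)$. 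On the strip I would use Taylor expansions $\sin(kx)\approx kx$ to get $C_n(x)=O(n^5 x^2)$ and $A_n(x)\geq n/2$ for $x=O(1/n)$, together with the general bound $D_n(x)\leq C_n(x)\leq n(n+1)(2n+1)/6$, to bound the strip contribution by $C\,\varepsilon(2n+1)$. Taking $n\to\infty$ then $\varepsilon\to 0$ would give the claimed equivalent.
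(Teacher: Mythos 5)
Your proposal reproduces the paper's argument: Kac--Rice, the conditional independence of the gradient components from Lemma~\ref{condidis}, the asymptotics $D_n(x)/A_n(x)\sim (2n+1)^2/12$ drawn from Lemma~\ref{asympto}, and the Rayleigh mean $\sqrt{\pi/2}$, leading to the same $(2n+1)\pi^2/(4\sqrt 3)$. You go slightly beyond the paper (which substitutes the pointwise asymptotics into the integral without comment) by justifying the interchange of limit and integral via an $\varepsilon$-bulk/boundary-strip decomposition; to make the strip estimate airtight you should record the uniform lower bound $A_n(x)\ge c\,n$ for \emph{all} $x\in[0,\pi]$ (using $A_n(x)=\tfrac n2+\tfrac{\sin(nx)\cos((n+1)x)}{2\sin x}$ and $\inf_y\sin_c(y)>-1$), not only for $x=O(1/n)$, but the idea is correct and the route is the same as the paper's.
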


\begin{proof}
By Kac--Rice formula, the expectation of the nodal length is equal to
\hspace{-1cm}
\begin{eqnarray*}
\mathbb E[ \ell_{[0, \pi]^2}(f_n) ]  &=&\displaystyle \iint_{[0,\pi]^2}\E \left( \sqrt{\left(\frac{\partial f_n}{\partial x}\right)^2+\left(\frac{\partial f_n}{\partial y}\right)^2} \big| f_n(x,y)=0\right) p_{f_n(x,y)}(0)dxdy \notag \\
&=& \displaystyle \iint_{[0,\pi]^2} \frac{1}{\sqrt{2\pi}\Sigma^{1/2}_{11}}\E \left( \sqrt{\left(\frac{\partial f_n}{\partial x}\right)^2+\left(\frac{\partial f_n}{\partial y}\right)^2} \big| f_n(x,y)=0\right) dxdy, \label{expect}
\end{eqnarray*}
where $p_{f_n(x,y)}$ is the density function of $f_n(x,y)$.
From Lemma \ref{condidis}, we have
$$\E \left( \sqrt{\left(\frac{\partial f_n}{\partial x}\right)^2+\left(\frac{\partial f_n}{\partial y}\right)^2} \big| f_n(x,y)=0\right)=\sqrt{\frac{\Sigma_{11}\Sigma_{22}-\Sigma^2_{12}}{\Sigma_{11}}}\E\left[  \sqrt{Z^2_1+Z^2_2(x,y)}\right],$$
where
$$(Z_1,Z_2((x,y) )\sim  \mathcal{N}\left( 0,  \left(
\begin{array}{cc}
1&0 \\ 
0 & \frac{\Sigma_{11}\Sigma_{33}-\Sigma^2_{13}}{\Sigma_{11}\Sigma_{22}-\Sigma^2_{12} }
\end{array} \right)\right).$$
Now using the explicit formulas of Lemmas \ref{asympt} and \ref{asympto}, as $n$ goes to infinity, we have 
\[
\frac{\sqrt{\Sigma_{11}\Sigma_{22}-\Sigma^2_{12}}}{\Sigma_{11}} \sim \frac{2n+1}{\sqrt{12}},
\]
and the distribution of $(Z_1,Z_2(x,y))$ converges to the one of a standard two-dimensional normal variable $Z=(Z_1,Z_2)$. Substituting these estimates in the above integral expression of $\mathbb E[ \ell_{[0, \pi]^2}(f_n) ]$, yieds, as $n$ tends to infinity,
\[
\mathbb E[ \ell_{[0, \pi]^2}(f_n) ]\sim \frac{2n+1}{\sqrt{24\pi}}\iint_{[0,\pi]^2} \E\sqrt{Z^2_1+Z^2_2} dxdy.
\]
Since $\sqrt{Z^2_1+Z^2_2}$ has the standard Rayleigh distribution, its expectation is equal to $\sqrt{\pi/2}$, which implies the statement of the theorem.
\end{proof}

\subsection{Moment control}\label{sec.moment}

In the above Theorem \ref{theo.local}, we proved that given a compact set $K \subset \mathbb R^2$ and as $n$ goes to infinity, the microscopic length $\ell_K(F_n)$ of the nodal set of the normalized trigonometric polynomial converges in distribution to $\ell_K(F_{\infty})$. The object of this subsection is to establish a uniform upper bound for the expectation of this microscopic length, uniform in both the degree $n$ and in the compact $K$. More precisely, taking care of the change of scale on the length of the nodal set, the mean macroscopic nodal length can be written as the sum
\begin{equation}\label{eq.sumlocal}
\frac{\mathbb E[ \ell_{[0,\pi]^2}(f_n) ]}{n} = \frac{\mathbb E[ \ell_{[0,n\pi]^2}(F_n) ]}{n^2} =  \frac{1}{n^2}\sum_{0 \leq k,l\le n-1}\E\left[l_{n,k,l}\right],
\end{equation}
where $l_{n,k,l}$ denotes the length of the nodal set associated to $F_n(x,y)$ inside the square $[k\pi,(k+1)\pi]\times[l\pi,(l+1)\pi]$. We shall prove the following uniform upper bound. 

\begin{prop} There exists $\alpha>0$ and $C>0$ such that 
\begin{equation}\label{.unibound}
\sup_{n \geq 1}\sup_{ 0\leq k, l\leq n-1} \E\left[l_{n,k,l}^{1+\alpha}\right]\leq C.
\end{equation}\label{pro.unibound}
\end{prop}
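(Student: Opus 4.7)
The plan is to reduce the two-dimensional nodal length to one-dimensional zero counts on axis-aligned slices via a Cauchy--Crofton / Banach indicatrix inequality, and then bound the moments of these univariate counts using the small ball estimates that the paper announces it will derive via the equidistribution of $\{kx\}\mod\pi$.

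\textbf{Step 1 (Geometric reduction).} On the almost sure event that the nodal set $Z_{A_{k,l}}(F_n)$ inside $A_{k,l}:=[k\pi,(k+1)\pi]\times[l\pi,(l+1)\pi]$ is a smooth $1$-submanifold (which holds by the non-degeneracy discussed in Lemma~\ref{lem.nondeg} and Remark~\ref{rem.nondege}), parametrize each of its components by arclength and use $\sqrt{\dot x^2+\dot y^2}\le |\dot x|+|\dot y|$. Integrating and recognizing the right-hand side as the total variation of the coordinate projections (Banach indicatrix formula) yields the deterministic bound
\[
l_{n,k,l}\ \le\ \int_{l\pi}^{(l+1)\pi} N_x(y)\,dy\ +\ \int_{k\pi}^{(k+1)\pi} N_y(x)\,dx,
\]
where $N_x(y):=\#\{x\in[k\pi,(k+1)\pi]:F_n(x,y)=0\}$ and $N_y(x)$ is defined symmetrically. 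Applying Jensen's inequality to each integral (each over an interval of length $\pi$) gives
\[
l_{n,k,l}^{1+\alpha}\ \le\ C_\alpha\Bigl(\int_{l\pi}^{(l+1)\pi}\!N_x(y)^{1+\alpha}dy+\int_{k\pi}^{(k+1)\pi}\!N_y(x)^{1+\alpha}dx\Bigr),
\]
so that the proposition reduces to showing $\sup_{n\ge1}\sup_{y}\E[N_x(y)^{1+\alpha}]<\infty$ and its analogue for $N_y$.

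\textbf{Step 2 (One-dimensional moment control).} For fixed $y$, the restriction $x\mapsto F_n(x,y)$ is itself a univariate trigonometric polynomial with independent random coefficients $b_k(y):=\sum_\ell a_{k,\ell}\cos(\ell y/n)$. The key observation is that if $N_x(y)\ge M$, then Rolle's theorem together with Bernstein's inequality for trigonometric polynomials forces $|F_n(x,y)|$ to be uniformly small on a regular grid of spacing of order $1/M$ inside $[k\pi,(k+1)\pi]$. A union bound then controls the tail of $N_x(y)$ in terms of the small ball probabilities of $F_n$ at finitely many deterministic points of the interval.

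\textbf{Step 3 (Invoking the small ball estimate).} The paper's announced small ball estimate (to be established in the next subsection via the ergodic behaviour of $\{kx\}\mod \pi$) has the form
\[
\sup_{n\ge 1}\sup_{(x,y)}\mathbb{P}\bigl(|F_n(x,y)|\le\varepsilon\bigr)\ \le\ C\,\varepsilon^{\gamma},
\]
for some $\gamma>0$. Combining this with the deterministic implication described in Step 2, and with uniform $L^p$ control on $\partial_x F_n(x,y)$ coming from independence and the classical CLT-type moment bounds on weighted sums of $(a_{k,\ell})$, one gets a polynomial tail $\mathbb{P}(N_x(y)\ge M)\le C M^{-(1+\alpha+\delta)}$ for some $\delta>0$ provided $\alpha$ is chosen small enough relative to $\gamma$. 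This tail bound gives the desired uniform $(1+\alpha)$-moment estimate and closes the argument.

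\textbf{Main obstacle.} The genuine difficulty is the small ball estimate itself. Under only i.i.d., centered, unit variance assumptions on $(a_{k,\ell})$, standard Kolmogorov--Rogozin or characteristic function techniques require a \emph{uniform} lower bound on $\frac{1}{n^2}\sum_{k,\ell}\cos^2(kx/n)\cos^2(\ell y/n)$, which fails pointwise near the coordinate axes and can be slowly convergent off them. Obtaining the estimate uniformly in $n$ and $(x,y)$ is precisely where the equidistribution properties of $\{kx\}\mod\pi$ enter, and this constitutes the technical heart of the subsequent subsection.
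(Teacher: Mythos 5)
Your Step~1 (reducing the nodal length to axis-parallel zero counts via a Banach indicatrix bound) is correct and parallels the paper's Theorem~\ref{Geo2}, which is, however, used there only for the deterministic a~priori bound $l_{n,k,l}\le 2n/c$ of Corollary~\ref{lem-apriori}. The genuine gap is in Step~3, and you do flag it, but it is not a technical wrinkle deferred to the next subsection: the uniform small ball estimate you invoke,
\[
\sup_{n\ge 1}\sup_{(x,y)}\mathbb{P}\bigl(|F_n(x,y)|\le\varepsilon\bigr)\le C\varepsilon^{\gamma},
\]
is strictly stronger than what the paper proves, and avoiding the need for it is the whole point of the paper's geometric construction. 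The small ball estimate \eqref{SBE} actually established holds only at lattice points $(k\pi,l\pi)$ with $\mathrm{ord}(k),\mathrm{ord}(l)\ge\sqrt n$. Its proof via the Hal\'asz method relies crucially on Lemma~\ref{Lemma-computation}, the exact rational-rotation identity $\frac{1}{n}\sum_k f(kp/n)=\frac{1}{\mathrm{ord}(p)}\sum_k f(k/\mathrm{ord}(p))$, so that $\Phi_{F_n(k\pi,l\pi)}$ factors into a clean power of a product over $\mathrm{ord}(k)\cdot\mathrm{ord}(l)$ indices and the resulting Riemann sums equidistribute at a controllable rate. At a point $(k\pi,y)$ with $y$ not a lattice multiple of $\pi$ --- which is exactly what your horizontal/vertical slicing produces once you integrate over $y$ --- this structure is absent, and the characteristic-function method gives nothing usable. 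The paper sidesteps this with Theorem~\ref{Geo1}: instead of axis-parallel slices it produces a line through a \emph{corner} of the square carrying proportionally many intersections, so that the iterated Rolle argument (your Step~2, essentially the paper's inequality~\eqref{bound-infinite norm}) lands the small-ball evaluation at the deterministic lattice point $(k\pi,l\pi)$. Remark~\ref{Remgeo} explains this trade-off explicitly: Crofton-style slicing with lines through random or integrated-over points destroys the independence structure needed for the characteristic-function computation. Without either proving the stronger uniform small ball bound from scratch or replacing your slicing by a corner-based geometric reduction \`a la Theorem~\ref{Geo1}, your argument does not close; the remaining ingredients (iterated Rolle, Sobolev-type derivative control as in Lemma~\ref{bound-Sobolev}, tail integration) are otherwise in the right spirit.
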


\subsubsection{Geometric considerations}
In this first subsection, we prove two elementary and purely geometric results. Both results relate the length of a smooth curve drawn in a unit square to the number of its intersections with some prescribed lines. As a corollary, we derive an a priori estimate for the microscopic length of a trigonometric polynomial in a unit square. Both proofs use the so-called probabilistic method saying that a random variable $X$ such that $\E(X)\ge c$ admits at least one realization $\omega$ such that $X(\omega)\ge c$.
\begin{rem}\label{Remgeo}
At first glance, one might be tempted to use the Crofton formula in order to relate the length of the nodal domain of a trigonometric polynomial, to the number of its intersections with some random lines. Nevertheless, such an approach faces two major obstructions. On the one hand, when the slope of such a line is irrational, then when restricting the bivariate trigonometric polynomial to it, the resulting random function is not anymore polynomial. 
For this reason, in the next Theorem \ref{Geo2}, we relate the length of the nodal set to its number of intersections with vertical or horizontal lines, which then allow us to derive a deterministic upper bound on the nodal length. On the other 
hand, since the nodal set is random, the lines intersecting it are also generically random. This randomness dependence is hard to manage when performing characteristic functions computations since we loose the structure of independant summands. This is why we prove the next Theorem \ref{Geo1} just below in order to ``force'' the lines to go through deterministic points on which the independance of summands is preserved and the characteristic functions method applicable.
\end{rem}

\begin{thm}\label{Geo1}
There exists an absolute constant $c>0$ such that for any unit square $\mathcal{S}$ with corners $A,B,C,D$ and any $C^1$ curve $\mathcal{C}$ inside $\mathcal{S}$ with length $l$, one may find a straight line $\mathcal{L}$ such that:
\begin{itemize}
\item[i)] $\{A,B,C,D\} \cap \mathcal{L} \neq \emptyset$,
\item[ii)] $ \#\{ \mathcal{L}\cap \mathcal{C} \} \ge c l$.
\end{itemize}
\end{thm}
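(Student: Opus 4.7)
The plan is to apply the probabilistic method to an appropriate randomization of lines through the four corners of $\mathcal{S}$. For each corner $A \in \{A,B,C,D\}$ and each angle $\theta$, let $\mathcal{L}_A(\theta)$ denote the line through $A$ entering $\mathcal{S}$ at angle $\theta$ (measured from a reference side emanating from $A$), and set $N_A(\theta) := \#\{\mathcal{L}_A(\theta) \cap \mathcal{C}\}$. Sampling the corner $A$ uniformly among the four corners and, independently, the angle $\theta$ uniformly on $[0,\pi/2]$, it suffices to show that $\mathbb{E}[N_A(\theta)] \geq c\, l$ for some absolute constant $c > 0$; then Theorem \ref{Geo1} will follow.

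To evaluate this expectation, parametrize $\mathcal{C}$ by arc length $s \in [0,l]$ and use polar coordinates $(r_A(s), \theta_A(s))$ centered at each corner $A$. A direct computation yields the pointwise identity $|\theta_A'(s)| = |\sin \psi_A(s)|/r_A(s)$, where $\psi_A(s)$ is the angle between the unit tangent $\mathcal{C}'(s)$ and the radial direction $\overrightarrow{A\mathcal{C}(s)}/r_A(s)$. Since $\mathcal{C} \subset \mathcal{S}$, the polar angle $\theta_A$ stays in an interval of length $\pi/2$, so the coarea formula gives
\[
\int_0^{\pi/2} N_A(\theta)\, d\theta \;=\; \int_0^l \frac{|\sin \psi_A(s)|}{r_A(s)}\, ds .
\]
Averaging over the four corners and the angle,
\[
\mathbb{E}[N_A(\theta)] \;=\; \frac{1}{2\pi} \int_0^l F\!\left(\mathcal{C}(s),\, \mathcal{C}'(s)\right) ds, \qquad F(P,\mathbf{v}) \;:=\; \sum_{A} \frac{|\sin \psi_A(P, \mathbf{v})|}{r_A(P)}.
\]

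The whole proof thus reduces to the pointwise geometric lower bound
\[
c_0 \;:=\; \inf_{P \in \mathcal{S}\setminus\{A,B,C,D\},\ |\mathbf{v}| = 1} F(P, \mathbf{v}) \;>\; 0,
\]
which immediately yields $\mathbb{E}[N_A(\theta)] \geq (c_0/(2\pi))\, l$ and hence the theorem with $c := c_0/(2\pi)$. This is the main obstacle of the proof. I would establish the bound by compactness: $F$ is continuous on its domain, so any infimizing sequence $(P_n, \mathbf{v}_n)$ admits, up to extraction, a limit $(P_\infty, \mathbf{v}_\infty)$ with $P_\infty$ lying either in the interior of $\mathcal{S}$ or at one of the four corners. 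An interior vanishing of $F$ would force $\mathbf{v}_\infty$ to be collinear simultaneously with each of the four rays $\overrightarrow{A_i P_\infty}$, which is impossible since those four rays have pairwise distinct directions at any interior point of $\mathcal{S}$. Near a corner $A^*$, the term $|\sin \psi_{A^*}|/r_{A^*}$ blows up unless $\mathbf{v}_n$ aligns asymptotically with the approach direction $\overrightarrow{A^* P_n}/r_{A^*}$; but in that regime $\mathbf{v}_\infty$ is fully determined, and it cannot be collinear simultaneously with each of the three limiting vectors $\overrightarrow{A^* A_i}$ with $A_i \neq A^*$, so at least one of the three remaining summands stays bounded away from $0$. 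This rules out degeneracy at the boundary and yields $c_0 > 0$; the pigeonhole/averaging step and the arc-length computation are then routine.
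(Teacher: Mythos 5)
Your proof is correct in outline, and it takes a genuinely different route from the paper. The paper also uses the probabilistic method, but with a different randomization: it picks a uniform random point $P$ inside the square, considers the four cevian lines $(AP),(BP),(CP),(DP)$, and shows by a piecewise-linear reduction that the expected total number of intersections with $\mathcal{C}$ is at least $\mathrm{length}(\mathcal{C})$; the key step is an explicit area computation for a single segment (the quantity $\mathbb{E}[X_{\mathcal{C}}]$ is expressed as a sum of four triangle areas, bounded below by the segment length via a rectangle argument). Your randomization instead picks a random corner and a random pencil angle, and converts the expectation into an arc-length integral of $F(P,\mathbf{v})=\sum_A |\sin\psi_A|/r_A$ via the one-dimensional coarea formula. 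The paper's approach is more elementary, avoids any compactness/contradiction argument, and yields an explicit constant directly; yours is conceptually cleaner (a single integral geometry identity, no polygonal approximation), but hides the constant inside the infimum $c_0$.

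Two small imprecisions worth fixing. First, the claim that "those four rays have pairwise distinct directions at any interior point of $\mathcal{S}$" is false: when $P$ lies on a diagonal, the rays to the two opposite corners through that diagonal are collinear (and $|\sin\psi|$ does not see sign). What is true, and is all you need, is that at no interior point are all four rays collinear (indeed no three corners are collinear, so at most one pair of the four rays can align). Second, the compactness argument must confront the fact that $(\mathcal{S}\setminus\{A,B,C,D\})\times S^1$ is not compact: you correctly outline the case $P_n\to A^*$, but "in that regime $\mathbf{v}_\infty$ is fully determined" should be replaced by "along a further subsequence $\overrightarrow{A^*P_n}/r_{A^*}(P_n)$ converges, and $\mathbf{v}_\infty$ must be collinear with that limit (else the $A^*$ term blows up and we are done)." One then observes that this limit direction can be collinear with at most one of the three fixed directions $\overrightarrow{A^*A_i}$, $A_i\neq A^*$, while the corresponding $r_{A_i}$ stay bounded, so at least two of the remaining terms of $F$ stay bounded away from $0$. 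With these patches the argument is complete.
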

\begin{proof} Using the probabilistic method, we will actually establish the above result with $c=1/4$. On a given probability space $(\Omega,\mathcal{F},\mathbb{P})$, we denote by $P$  a random point inside the square with uniform distribution. Set
\[
X_{\mathcal{C}}:=\#\left\{\mathcal{C} \cap \left((AP)\cup (BP) \cup (CP) \cup (DP)\right)\right\}.
\]
Then the result follows if one can show that $\E(X_{\mathcal{C}}) \ge c l$. Indeed, in this case, there exists one realization of the random variable such that $X_{\mathcal{C}}(\omega)\ge c l$. Notice that, since $\mathcal{C}$ is assumed to be $C^1$, it is rectifiable. Hence, one might try to seek for $(\mathcal{C}_p)_{p \geq 1}$ a sequence of polygonal lines $\mathcal{C}_p$ such that:
\[
\begin{array}{ll}
\text{(i)}& \forall p \ge 1,\,\E[X_{\mathcal{C}}]\ge \E[X_{\mathcal{C}_p}],\\
\text{(ii)}& \ell(\mathcal{C}_p) \to \ell(\mathcal{C}),\\
\text{(iii)} & \E[X_{\mathcal{C}_p}] \ge \frac{1}{4}\ell(\mathcal{C}_p).
\end{array}
\]
Assume that the curve $\mathcal{C}$ is parametrized by two functions of class $C^1$, that is to say $\mathcal{C}=\left\{ (x(t),y(t))\,\left|\right. t\in [0,1] \right\}$ and consider the polygonal line $\mathcal{C}_p$ interpolating between the points $(x(\frac{k}{p}),y(\frac{k}{p}))$, for $0 \leq k \leq p$. At this stage, we notice that (i) is a consequence of connexity and (ii) proceeds from the fact that $\mathcal{C}$ is rectifiable. Thus, one is only left to establish (iii).
By the linearity of the expectation, without loss of generality, we may just consider the case when $\mathcal{C}$ is the segment $IJ$ contained in the domain $OCD$, see Figure \ref{linecorner} below. If it is not the case, then we can always split it in two segments respectively contained in the domains $OCD$ and $ABC$ respectively.
Note that the point $I$ is on the left of $J$. Assume that $J$ is higher than $I$. Since for each line $(AP)$ (or $(BP),(CP),(DP)$), there is at most one intersection point with $\mathcal{C}$,
\begin{align*}
\E[X_{\mathcal{C}}] & = \displaystyle \mathbb{P} \{(AP) \cap \mathcal{C} \neq \emptyset  \}+\mathbb{P} \{(BP) \cap \mathcal{C} \neq \emptyset  \}+\mathbb{P} \{(CP) \cap \mathcal{C} \neq \emptyset  \}+\mathbb{P} \{(DP) \cap \mathcal{C} \neq \emptyset  \} \\
&= \displaystyle \frac{\lambda_2(AA_1A_2)+\lambda_2(BB_1B_2)+\lambda_2(CC_1C_2)+\lambda_2(DD_1D_2)}{\lambda_2(ABCD)}\\
&= A_1A_2+B_1B_2+C_1C_2+D_1D_2,
\end{align*}
where $\lambda_2$ stands for the area or two-dimensional Lebesgue measure and $A_1,A_2$ are the intersections between $AI,AJ$ and $CD$. 

From $I$, draw a line parallel to $CD$ which intersects $AA_2$ at $I_1$; similarly, draw a line parallel to $(AD)$ which intersects $CC_2$ at $I_2$. Now, draw the rectangle $II_2I_3I_1$. It is easy to check that the point $J$ must lie inside this rectangle. Therefore,
$$A_1A_2+C_1C_2 \geq II_1+II_2 \geq II_3 \geq IJ.$$
Here we use a simple observation that: the largest distance between two points in a rectangle is the length of the diagonal. Then it implies $\E (X_{\mathcal{C}}) \ge IJ=\mbox{length}(\mathcal{C})$.

 \begin{figure}[ht]
\centering
\begin{tikzpicture}[scale=2]
\draw[thick] (0,0) rectangle (2,2) ;
\draw[dotted] (0,0) -- (2,2);
\draw[dotted] (0,2) -- (2,0);
\draw (0.5,0.3)--(1.2,0.6);
\draw (2,0)--(0,0.4);
\draw (2,0)--(0,1.5);
\draw (0,2)--(0.588,0);
\draw (0,2)--(1.714,0);
\draw[dashed](0.5,0.3)--(1.457,0.3)--(1.457,1.125)--(0.5,1.125)--(0.5,0.3);
\draw (0,0) node[left]{D};
\draw (0,2) node[left]{A};
\draw (2,0) node[right]{C};
\draw (2,2) node[right]{B};
\draw (0.5,0.25) node[left]{I};
\draw (1.2,0.62) node[right]{J};
\draw (0,0.4) node[left]{$C_1$};
\draw (0,1.5) node[left]{$C_2$};
\draw (0.588,-0.1) node{$A_1$};
\draw (1.714,-0.1) node{$A_2$};
\draw (1,1) node[right]{O};
\draw(1.457,0.3) node[right]{$I_1$};
\draw (0.5,1.125) node[left]{$I_2$};
\draw (1.457,1.125) node[right]{$I_3$};
\end{tikzpicture}
\caption{A segment case.} \label{linecorner}
\end{figure}
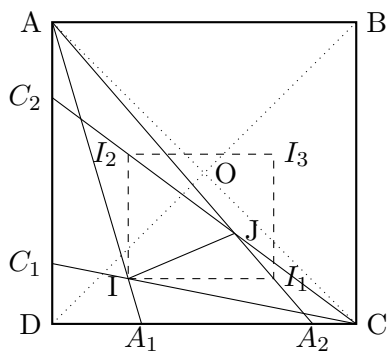

Otherwise, if $I$ is higher than $J$ we make an analoguous reasoning by simply considering the two triangles $BB_1B_2$ and $DD_1D_2$.
\end{proof}
\begin{thm}\label{Geo2}
There exists an absolute constant $c>0$ such that for any unit square $\mathcal{S}$ with corners $A,B,C,D$ and any $C^1$ curve $\mathcal{C}$ inside $\mathcal{S}$ with length $l$, one may find an horizontal or vertical straight line $\mathcal{L}$ such that $\# \{\mathcal{L}\cap \mathcal{C} \} \ge c l$.
\end{thm}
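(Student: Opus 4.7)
The plan is to mimic the probabilistic method used in Theorem \ref{Geo1}, but replace the four pencils of lines through the corners by the two pencils of axis-parallel lines. The key ingredient is the one-dimensional Banach indicatrix identity (equivalently, the coarea formula in dimension one), which counts the number of preimages of a value by the total variation of a $C^1$ function.

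Concretely, write $\mathcal{S} = [0,1]^2$ and parametrize $\mathcal{C}$ as $t \mapsto (x(t),y(t))$, $t \in [0,1]$, with both coordinates of class $C^1$. For $a \in [0,1]$, let $N_h(a) := \#\{t \in [0,1] : y(t)=a\}$ denote the number of intersections of $\mathcal{C}$ with the horizontal line $\{y=a\}$, and define $N_v(b)$ analogously for the vertical line $\{x=b\}$. First I would recall (or verify directly by a partition argument on intervals of monotonicity) the identities
\[
\int_0^1 N_h(a)\,da \;=\; \int_0^1 |y'(t)|\,dt, \qquad \int_0^1 N_v(b)\,db \;=\; \int_0^1 |x'(t)|\,dt.
\]
Adding these and using $|x'(t)| + |y'(t)| \ge \sqrt{x'(t)^2+y'(t)^2}$ gives
\[
\int_0^1 N_h(a)\,da + \int_0^1 N_v(b)\,db \;\ge\; \int_0^1 \sqrt{x'(t)^2+y'(t)^2}\,dt \;=\; \ell(\mathcal{C}) \;=\; l.
\]

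The probabilistic argument then reads as follows. Let $A$ and $B$ be independent uniform random variables on $[0,1]$; the previous inequality states that $\mathbb{E}[N_h(A)] + \mathbb{E}[N_h(B)] \geq l$, so at least one of the two expectations is $\geq l/2$. Hence there exists either a level $a_0$ with $N_h(a_0) \ge l/2$ or a level $b_0$ with $N_v(b_0) \ge l/2$, and the corresponding horizontal or vertical line $\mathcal{L}$ furnishes the conclusion with $c=1/2$. (If $\mathcal{C}$ is only piecewise $C^1$, a standard approximation by polygonal lines handles the general case exactly as in the proof of Theorem \ref{Geo1}.)

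Virtually no obstacle is expected here: the only delicate point is justifying the Banach indicatrix identities, which is standard for $C^1$ maps (one decomposes $[0,1]$ into maximal intervals of monotonicity and applies the change-of-variable formula on each). The final constant $c=1/2$ is not optimal—one could squeeze it to $c=1/\sqrt{2}$ by replacing $|x'|+|y'| \ge \sqrt{x'^2+y'^2}$ with the sharper $\sqrt{2}\,\sqrt{x'^2+y'^2} \ge |x'|+|y'|$ and redistributing—but any absolute constant suffices for the intended application.
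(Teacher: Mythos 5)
Your proof is correct and uses the same probabilistic-method template as the paper, but a genuinely cleaner route for the expectation computation. Where the paper reduces to a polygonal approximation and then argues segment by segment that the horizontal and vertical projections of a segment $IJ$ have total length $\ge |IJ|$, you apply the Banach indicatrix (one-dimensional coarea) identity directly to the $C^1$ parametrization, obtaining
\[
\int_0^1 N_h(a)\,da + \int_0^1 N_v(b)\,db = \int_0^1\big(|x'(t)|+|y'(t)|\big)\,dt \ge \int_0^1\sqrt{x'(t)^2+y'(t)^2}\,dt = \ell(\mathcal{C}).
\]
This avoids the polygonal-approximation step entirely and makes the semicontinuity issues a non-issue; the underlying pointwise inequality $|x'|+|y'|\ge\sqrt{x'^2+y'^2}$ is exactly the one the paper exercises on segments. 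The conclusion $c=1/2$ matches. One small caveat: your closing remark about improving the constant to $1/\sqrt{2}$ is not right as stated. The inequality $\sqrt{2}\sqrt{x'^2+y'^2}\ge|x'|+|y'|$ points the wrong way and does not let you improve the lower bound on $\max\bigl(\int N_h,\int N_v\bigr)$. Indeed, for a smoothed staircase curve with $|x'|\equiv|y'|$ one has $\int N_h=\int N_v=\ell/2$, so $c=1/2$ is the best constant this integral bound can give. Fortunately, as you note, any absolute constant suffices for the application, so this aside does not affect the proof.
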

\begin{proof}
Here, we use again the probabilistic method and the piecewise linear approximation to prove the claimed result for $c=1/2$. Let us just consider the curve $\mathcal{C}$ as a segment $IJ$. Choose uniformly a horizontal line inside the square (i.e. choose uniformly a point on $AD$ and draw a horizontal line from this point), define $X_1$ as the number of intersection points between this line and $IJ$. Similarly, choose uniformly a vertical line inside the square and define $X_2$. Then clearly,
\[
\E X_1 +\E X_2 =I_1J_1+I_2J_2 \geq IJ,
\]
where $I_1J_1$ and $I_2J_2 $ are the projections of $IJ$ on $AD$ and $CD$. 
Therefore, there exist a horizontal line and a vertical one such that the total number of intersection points with the nodal curve is at least $l$. This yields the statement of the theorem.
\end{proof}

We can now derive the announced a priori estimate on the microscopic nodal length.

\begin{cor}\label{lem-apriori}
Suppose that $Q(x,y)$ is any trigonometric polynomial of degree $n$ and denote by $l_{k,l}$ the length of the nodal line of $n^{-1} Q(\frac{x}{n},\frac{y}{n})$ in $[k\pi,(k+1)\pi]\times[l\pi,(l+1)\pi]$. Then we have
\begin{equation}\label{eq-apriori}
l\le \frac{2n}{c} 
\end{equation} 
\end{cor}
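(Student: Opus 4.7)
The plan is to combine the geometric pigeonholing of Theorem \ref{Geo2} with a classical bound on the zero set of a real trigonometric polynomial of one variable. First, I would apply Theorem \ref{Geo2} to the nodal curve $\mathcal{C}$ of $n^{-1} Q(x/n, y/n)$ inside the square $[k\pi,(k+1)\pi] \times [l\pi,(l+1)\pi]$. This square has side $\pi$ rather than $1$, so I would first rescale it to a unit square by the dilation $(x,y)\mapsto(x/\pi,y/\pi)$, under which the length of the nodal set becomes $l_{k,l}/\pi$ and horizontal (resp.\ vertical) lines are mapped to horizontal (resp.\ vertical) lines. Theorem \ref{Geo2} then yields a horizontal or vertical line $\mathcal{L}$ whose number of intersection points with the (rescaled) nodal curve is at least $c\, l_{k,l}/\pi$.

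Next, I would bound the number of such intersections from above. Along a horizontal line $y=y_0$, the restriction of $Q(\cdot/n, y_0/n)$ is, up to the global multiplicative constant $1/n$,
\[
x\longmapsto \sum_{k=1}^{n} \gamma_k(y_0)\cos\!\Big(\frac{kx}{n}\Big),\qquad \gamma_k(y_0)=\sum_{l=1}^{n} a_{k,l}\cos\!\Big(\frac{l y_0}{n}\Big),
\]
which in the variable $u:=x/n$ is a real trigonometric polynomial of degree at most $n$. Writing this polynomial via $z=e^{iu}$ as $z^{-n}P(z)$ for an algebraic polynomial $P$ of degree at most $2n$, one sees that it has at most $2n$ real zeros per period of length $2\pi$ in $u$. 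In particular, on the subinterval $[k\pi/n,(k+1)\pi/n]$ of length $\pi/n<2\pi$ it still has at most $2n$ zeros. The analogous bound holds for any vertical line by symmetry.

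Combining the two steps yields $c\, l_{k,l}/\pi \le 2n$, i.e.\ $l_{k,l}\le 2\pi n/c$, and absorbing the factor $\pi$ into the constant $c$ gives the announced bound $l_{k,l}\le 2n/c$. The only real obstacle is to get the scaling right so that the geometric inequality of Theorem \ref{Geo2} and the trigonometric zero-count are applied to the same curve in compatible coordinates; once this bookkeeping is in place, the argument is essentially immediate. Note that the same proof works for \emph{any} real bivariate trigonometric polynomial $Q$ of degree $\le n$, not only for $Q=f_n$, since the key input—that the restriction to a horizontal or vertical line is a univariate trigonometric polynomial of degree $\le n$—depends only on the bidegree of $Q$.
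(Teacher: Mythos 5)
Your proof is correct and follows the same route as the paper: invoke Theorem \ref{Geo2} to find an axis-parallel line hitting the nodal curve proportionally to its length, then bound the zero count of the restricted univariate trigonometric polynomial. You are actually more careful than the paper's one-line argument about the $\pi$-side versus unit-square rescaling and about why a degree-$n$ trigonometric polynomial has at most $2n$ zeros on the relevant interval; the paper states the bound somewhat loosely (``at most $n$ roots over any unit interval'') and leaves the scaling implicit, so your bookkeeping is a genuine improvement in rigor even though it changes nothing substantive.
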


\begin{proof}
Thanks to Theorem \ref{Geo2}, there exists a vertical or horizontal line having at least $l/2$ intersection points with the nodal curve. Otherwise, restricted on this line, $Q(x,y)$ becomes a trigonometric polynomial with only one variable; so it has at most $n$ roots over any unit interval. Then the result follows.
\end{proof}

\subsubsection{Small ball estimate}
In this section, we show that the uniform upper bound stated in Proposition \ref{pro.unibound} actually reduces to a small ball estimate for the rescaled polynomial $F_n$ at well chosen lattice points.  
To do so, let us first recall some standard number theory considerations which will be used throughout the sequel. Let $n$ be any positive integer and let $p \in \mathbb N$. We shall denote by $\text{ord}(p)$ the order of $p$ in the group $\left(\mathbb{Z}/n \mathbb{Z},+\right)$, that is to say, $\text{ord}(p)= \frac{n}{\gcd(p,n)}$. Then we have the two next lemmas.

\begin{lem}\label{bound-order}
$\max(\text{ord}(p),\text{ord}(p+1)) \ge \sqrt{n}.$
\end{lem}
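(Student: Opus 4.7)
The plan is to translate the statement into a divisibility inequality and exploit the coprimality of consecutive integers. Set $d_1 := \gcd(p,n)$ and $d_2 := \gcd(p+1,n)$, so that by definition $\text{ord}(p) = n/d_1$ and $\text{ord}(p+1) = n/d_2$. The key observation is that $\gcd(p,p+1)=1$, so any common divisor of $d_1$ and $d_2$ would divide both $p$ and $p+1$, forcing $\gcd(d_1,d_2)=1$.

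From this coprimality, together with the fact that both $d_1$ and $d_2$ divide $n$, I get $d_1 d_2 \mid n$ and in particular $d_1 d_2 \leq n$. Multiplying the two orders then yields
\[
\text{ord}(p)\cdot \text{ord}(p+1) \;=\; \frac{n^2}{d_1 d_2} \;\geq\; \frac{n^2}{n} \;=\; n.
\]
Since the maximum of two positive numbers is at least the square root of their product, I conclude
\[
\max\bigl(\text{ord}(p),\text{ord}(p+1)\bigr) \;\geq\; \sqrt{\text{ord}(p)\cdot \text{ord}(p+1)} \;\geq\; \sqrt{n},
\]
which is the desired bound. There is no real obstacle here; the whole argument is a two-line elementary number-theoretic computation, the only subtlety being the observation that divisors of $n$ coming from coprime residues must themselves be coprime, hence their product still divides $n$.
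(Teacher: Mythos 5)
Your proof is correct and uses the same key idea as the paper, namely that $\gcd(p,n)$ and $\gcd(p+1,n)$ are coprime so their product divides $n$. The only cosmetic difference is that you argue directly via the AM--GM-type inequality $\max \geq \sqrt{\text{product}}$, whereas the paper runs the identical computation as a proof by contradiction.
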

\begin{proof}
Arguing by contradiction, we both assume that $\text{ord}(p)<\sqrt{n}$ and $\text{ord}(p+1)<\sqrt{n}$. We then have $\gcd(p,n)>\sqrt{n}$ and $\gcd(p+1,n)>\sqrt{n}$. However, since $\gcd(p,p+1)=1$ it holds that $\gcd\left(\gcd(p,n),\gcd(p+1,n)\right)=1$ and thus $\gcd(p,n) \gcd(p+1,n)$ divides $n$. This implies $n< \gcd(p,n) \gcd(p+1,n) \le n$ which is a contradiction.
\end{proof}

\begin{lem}\label{Lemma-computation}
For any $1$-periodic function $f$ and any integer $p\ge 1$, 
\begin{equation}\label{Equation-computation}
\frac{1}{n}\sum_{k=1}^n f \left(\frac{kp}{n}\right)=\frac{1}{\text{ord}(p)}\sum_{k=1}^{\text{ord}(p)} f \left(\frac{k}{\text{ord}(p)}\right).
\end{equation}
\end{lem}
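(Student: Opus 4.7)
The plan is to reduce the left-hand side to a sum over a complete residue system modulo $m := \text{ord}(p) = n/\gcd(p,n)$, and then exploit the fact that multiplication by $p/\gcd(p,n)$ permutes that system.

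First I would set $d := \gcd(p,n)$, so that $m = n/d$, and factor $p = dp'$ with $\gcd(p',m) = 1$. The elementary identity $kp/n = kp'/m$ then rewrites the left-hand side as $\frac{1}{n}\sum_{k=1}^n f(kp'/m)$.

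Next I would parametrize $k \in \{1,\dots,n\}$ as $k = jm + r$ with $0 \le j \le d-1$ and $1 \le r \le m$, and use the $1$-periodicity of $f$ together with $jp' \in \mathbb{Z}$ to obtain $f(kp'/m) = f(rp'/m)$. This collapses the outer sum:
\[
\frac{1}{n}\sum_{k=1}^n f\!\left(\frac{kp'}{m}\right) \;=\; \frac{d}{n}\sum_{r=1}^m f\!\left(\frac{rp'}{m}\right) \;=\; \frac{1}{m}\sum_{r=1}^m f\!\left(\frac{rp'}{m}\right).
\]
Finally, since $\gcd(p',m)=1$, the map $r \mapsto rp' \bmod m$ is a bijection on $\{1,\dots,m\}$ viewed inside $\mathbb{Z}/m\mathbb{Z}$; combined once more with $1$-periodicity of $f$ to absorb the wrap-around (so that a residue $0$ may be represented by $m$), this allows me to re-index the sum as $\sum_{r=1}^m f(r/m)$, which is exactly the right-hand side of \eqref{Equation-computation}.

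There is no genuine obstacle here: the identity is merely a packaging of $1$-periodicity together with the elementary group-theoretic fact that multiplication by a unit permutes $\mathbb{Z}/m\mathbb{Z}$. The only point requiring care is the final re-indexing, where one must invoke periodicity to pass from residues in $\{0,1,\dots,m-1\}$ to the representatives $\{1,\dots,m\}$ used in the stated formula.
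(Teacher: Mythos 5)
Your proof is correct and follows essentially the same path as the paper's: reduce $kp/n$ to $kq/\mathrm{ord}(p)$ with $q$ coprime to $\mathrm{ord}(p)$, split $\{1,\dots,n\}$ into $n/\mathrm{ord}(p)$ complete residue systems using $1$-periodicity, and then use that multiplication by the unit $q$ permutes $\mathbb{Z}/\mathrm{ord}(p)\mathbb{Z}$. The only cosmetic difference is the order of operations (you collapse the block sum first, then permute residues; the paper does the reverse).
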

\begin{proof}
It is clear that $p/n=q/\text{ord}(p)$ where $gcd(q,\text{ord}(p))=1$. Since the set $q\times \{1,2,\ldots ,\text{ord}(p)\}$ is a complete residue system of modulo $\text{ord}(p)$ and since the function $f$ is $1$-periodic,
\begin{equation*}
\sum_{k=1}^{\text{ord}(p)}  f \left(\frac{kp}{n}\right) = \sum_{k=1}^{\text{ord}(p)}  f \left(\frac{kq}{\text{ord}(p)}\right) =\sum_{k=1}^{\text{ord}(p)}  f \left(\frac{k}{\text{ord}(p)}\right) .
\end{equation*}
The result follows from the fact that one can divide the set $\{1,2,\ldots , n\}$ into $n/\text{ord}(p)$ complete residue systems.
\end{proof}
\paragraph{Towards a small ball problem}

Let us give us $\alpha>0$ to be chosen later. In virtue of Corollary \ref{lem-apriori}, we have

\begin{eqnarray*}
\E(l_{n,k,l}^{1+\alpha})&=& (1+\alpha)\int_0^\infty t^{\alpha} \mathbb{P}\left( l_{n,k,l} > t\right) dt\\
&\le& (1+\alpha) \int_0^{{\frac{2n}{c}}}t^{\alpha} \mathbb{P}\left( l_{n,k,l} > t\right) dt.\\
\end{eqnarray*}
Thus, one is left to estimate the term $\mathbb{P}\left( l_{n,k,l} > t\right)$. To do so, we shall use the content of Theorem \ref{Geo1}. We place ourselves on the square $[k\pi,(k+1)\pi]\times [l\pi,(l+1)\pi]$ and we know that there exists a straightline, say $\mathcal{L}$, such that
\begin{itemize}
\item[(i)] $(k\pi,l\pi)$ or $((k+1)\pi,l\pi)$ or $(k\pi,(l+1)\pi)$ or $((k+1)\pi,(l+1)\pi)$ is on $\mathcal{L}$,
\item[(ii)] the number of roots of $F_n$ restricted to $\mathcal{L}\cap [k\pi,(k+1)\pi]\times [l\pi,(l+1)\pi]$ is greater than $c t$. 
\end{itemize}
Now, in order to fix the ideas, assume that $(k\pi,l\pi)\in \mathcal{L}$ and denote by $(u,v)$ the unit vector leading the straight line $\mathcal{L}$. We set $\phi_n(t)=F_n (k\pi+ t u , l\pi+t v)$ for $t\in [ 0, T ]$ where $T$ is the largest positive number such $(k\pi,l\pi)+t (u,v)$ is inside the square. In particular, a simple application of Pythagore Theorem entails that $T\le \pi \sqrt{2}$. As a result, we know that $\phi_n$ vanishes at least $r=\lfloor c t \rfloor$ times in the interval $[0,\pi \sqrt{2}]$. Let us introduce $a_1$ a root of $\phi'$, $a_2$ a root of $\phi''$, $a_3$ a root of $\phi'''$,... and $a_{r-1}$ a root of $\phi^{(r-1)}$ (which exist by a repeated application of Rolle's Theorem). We may write

$$\phi_n(x_1)=\int_{a_1}^{x_1}\int_{a_2}^{x_1}\cdots\int_{a_{r-1}}^{x_{r-1}} \phi^{(r-1)} ( x_r ) dx_r dx_{r-1} \cdots dx_2.$$

Taking $x_1=0$ and using the triangle inequality, one may deduce the following inequality

\begin{equation}\label{bound-infinite norm}
\left|\phi_n(0)\right|=\left| F_n( k \pi, l\pi)\right| \le \frac{(\pi \sqrt{2})^{r-1}}{(r-1)!} \|\phi^{(r-1)}\|_\infty.
\end{equation}
As  a result, for any $M>0$, we get
\begin{eqnarray}
\mathbb{P}\left(l_{n,k,l}>t\right)&\le& \mathbb{P}\left(\left| F_n( k \pi, l\pi)\right| \le \frac{(\pi \sqrt{2})^{r-1}}{(r-1)!} \|\phi^{(r-1)}\|_\infty\right) \nonumber\\
&\le& \mathbb{P}\left(\left| F_n( k \pi, l\pi)\right| \le M\frac{(\pi \sqrt{2})^{r-1}}{(r-1)!} \right)+\mathbb{P}\left(\|\phi^{(r-1)}\|_\infty>M\right).\label{laborne}
\end{eqnarray}
Recall that we have assumed that $(k\pi,l\pi)$ belongs to $\mathcal{L}$. In the general case, we rather have
\begin{eqnarray*}
\mathbb{P}\left(l_{n,k,l}>t\right)&\le& \mathbb{P}\left(\left| F_n( k \pi, l\pi)\right| \le \frac{(\pi \sqrt{2})^{r-1}}{(r-1)!} \|\phi^{(r-1)}\|_\infty\right)\\
&+&\mathbb{P}\left(\left| F_n( (k+1) \pi, l\pi)\right| \le \frac{(\pi \sqrt{2})^{r-1}}{(r-1)!} \|\phi^{(r-1)}\|_\infty\right)\\
&+&\mathbb{P}\left(\left| F_n( k \pi, (l+1)\pi)\right| \le \frac{(\pi \sqrt{2})^{r-1}}{(r-1)!} \|\phi^{(r-1)}\|_\infty\right)\\
&+&\mathbb{P}\left(\left| F_n( (k+1) \pi, (l+1)\pi)\right| \le \frac{(\pi \sqrt{2})^{r-1}}{(r-1)!} \|\phi^{(r-1)}\|_\infty\right)\\
&\le&\mathbb{P}\left(\left| F_n( k \pi, l\pi)\right| \le M\frac{(\pi \sqrt{2})^{r-1}}{(r-1)!}\right)\\
&+&\mathbb{P}\left(\left| F_n( (k+1) \pi, l\pi)\right| \le M\frac{(\pi \sqrt{2})^{r-1}}{(r-1)!}\right)\\
&+&\mathbb{P}\left(\left| F_n( k \pi, (l+1)\pi)\right| \le M\frac{(\pi \sqrt{2})^{r-1}}{(r-1)!}\right)\\
&+&\mathbb{P}\left(\left| F_n( (k+1) \pi, (l+1)\pi)\right| \le M\frac{(\pi \sqrt{2})^{r-1}}{(r-1)!}\right)
+4 \mathbb{P}\left(\|\phi^{(r-1)}\|_\infty>M\right).\\
\end{eqnarray*}

The last estimate requires the following bound.
\begin{lem}\label{bound-Sobolev}
$$\mathbb{P}\left(\|\phi^{(r-1)}\|_\infty>M\right)\le \frac{C}{M}$$
\end{lem}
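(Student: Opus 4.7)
The natural route is to apply Markov's inequality: since
\[
\mathbb{P}\!\left(\|\phi^{(r-1)}\|_\infty > M\right) \;\le\; \frac{\E[\|\phi^{(r-1)}\|_\infty]}{M},
\]
it is enough to bound $\E[\|\phi^{(r-1)}\|_\infty]$ by a constant (possibly depending on $r$ and on the fixed interval length $T\le\pi\sqrt 2$, but crucially not on $n$).

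To control the supremum on $[0,T]$ I would invoke the elementary one-dimensional Sobolev embedding
\[
\|g\|_\infty^2 \;\le\; \tfrac{2}{T}\|g\|_{L^2([0,T])}^2 + 2T\,\|g'\|_{L^2([0,T])}^2,
\]
applied to $g=\phi^{(r-1)}$. Combined with Jensen and Fubini, this reduces the whole problem to a uniform pointwise bound on $\E[\phi^{(k)}(t)^2]$ for $k\in\{r-1,r\}$ and $t\in[0,T]$.

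The pointwise variance can be written down explicitly. Setting
\[
\phi(t) \;=\; \frac{1}{n}\sum_{1\le j,m\le n} a_{j,m}\,f_{j,m}(t), \qquad f_{j,m}(t):=\cos\!\bigl(\tfrac{j(k\pi+tu)}{n}\bigr)\cos\!\bigl(\tfrac{m(l\pi+tv)}{n}\bigr),
\]
and using the independence and unit variance of the $a_{j,m}$, one obtains
\[
\E[\phi^{(k)}(t)^2] \;=\; \frac{1}{n^2}\sum_{1\le j,m\le n} \big[f_{j,m}^{(k)}(t)\big]^2.
\]
The key elementary observation is that each derivation of $f_{j,m}$ brings down a factor $ju/n$ or $mv/n$, both of modulus at most one since $j,m\le n$ and $u^2+v^2=1$. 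Leibniz's rule then yields $|f_{j,m}^{(k)}(t)|\le 2^k$ uniformly in $j,m$ and $t$, so that $\E[\phi^{(k)}(t)^2]\le 4^k$. The prefactor $1/n^2$ is exactly absorbed by the $n^2$ terms of the sum, which is precisely why the final bound is independent of $n$.

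Assembling the three steps gives $\E[\|\phi^{(r-1)}\|_\infty]\le C(r)$ for some constant $C(r)$ growing at most like $C\cdot 2^r$, and Markov concludes. There is no genuine obstacle here; the only mild subtlety is that the constant $C$ in the statement of the lemma is allowed to depend on $r$. This is harmless for the subsequent balancing of \eqref{laborne}, where the factorial factor $1/(r-1)!$ appearing in \eqref{bound-infinite norm} easily dominates any exponential-in-$r$ growth of $C(r)$.
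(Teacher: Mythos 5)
Your proof is correct and follows the same basic strategy as the paper (Markov's inequality combined with a Sobolev-type embedding and an $L^2$ computation exploiting orthogonality of the $a_{j,m}$), but with a genuinely different decomposition. The paper applies a \emph{two-dimensional} Sobolev inequality to each partial derivative $\partial_1^i\partial_2^j F_n$ on the square $K=[k\pi,(k+1)\pi]\times[l\pi,(l+1)\pi]$, bounds $\mathbb{E}\!\int_K(\partial_1^i\partial_2^j F_n)^2\,dx\le 1$ by Fubini and orthogonality, and then reassembles $\phi^{(r)}$ via the chain rule $\phi^{(r)}=\sum_{i+j=r}\binom{r}{i}u^iv^j\,\partial_1^i\partial_2^j F_n(\cdots)$. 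You instead work entirely \emph{one-dimensionally}: apply a 1D Sobolev embedding to $g=\phi^{(r-1)}$ on $[0,T]$ and compute the pointwise variance $\mathbb{E}[\phi^{(k)}(t)^2]=\frac{1}{n^2}\sum_{j,m}[f_{j,m}^{(k)}(t)]^2\le 4^k$ directly from the explicit coefficient representation. Your route is a bit more streamlined and avoids passing through the mixed partials of $F_n$. It also makes the $r$-dependence of the constant more transparent: you correctly track the binomial coefficients from the Leibniz rule and obtain $C(r)\asymp 2^r$, whereas the paper's displayed bound $\frac{rC_K\sqrt{3}}{M}$ silently drops the factors $\binom{r}{i}$ in the chain rule and so records only a linear factor in $r$. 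As you note, this discrepancy is immaterial because in the synthesis the factor $\frac{(\pi\sqrt2)^{r-1}}{(r-1)!}$ dominates any exponential-in-$r$ growth after optimizing over $M$, so the lemma is used with a constant that effectively depends on $r$ in both treatments.
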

\begin{proof}
Let us recall that for any compact $K=[a,b]\times [c,d]$, there exists an absolute positive constant $C_{K}$ (depending only on $b-a$ and $d-c$) such that for any $C^1$ mapping $f$, one gets the inequality:
\begin{equation}\label{Sobolev-ineq}
\sup_{x\in K}|f(x)| \leq C_{K}\left( \int_K f^2(x) dx + \int_a^b \|\nabla f(x)\|^2 dx \right)^{\frac{1}{2}}.
\end{equation}
Setting $K=[k\pi,(k+1)\pi]\times[l\pi,(l+1)\pi]$ and recalling that $(u,v)$ is an unit vector, we first notice that
\[
|\phi^{(r)}(t)|=\left|\sum_{i+j=r} \partial_{1}^i\partial_{2}^j F_n(k\pi+tu,l\pi+tv) u^i v^j\right|
\le \sum_{i+j=r} \sup_{x\in K}\left|\sum_{i+j=r} \partial_{1}^i\partial_{2}^j F_n (x)\right|.
\]
Thus, one is left to bound from above each partial derivatives $\partial_1^i \partial_2^j F_n$ on the compact set $K$. Here, we apply the inequality (\ref{Sobolev-ineq}) and we get
\[
\sup_{x\in K}\left|\partial_1^i \partial_2^j F\right| \leq C_{K}\left|\int_K \left( \left|\partial_1^i \partial_2^j F_n(x)\right|^2
+\left|\partial_1^{i+1} \partial_2^j F_n(x)\right|^2
+ \left|\partial_1^i \partial_2^{j+1} F_n(x)\right| ^2\right) dx\right|^{\frac{1}{2}}.
\]
However, for any couple of indexes $(i,j)$, we have by Fubini and orthogonality of the random variables $\{a_{r,s}\}$:
\begin{eqnarray*}
&&\E\left(\int_K \left(\partial_1^i \partial_2^j F_n(x)\right)^2 dx\right)\\
&=&\frac{1}{n^2}\E\left[\int_{k\pi}^{(k+1)\pi}\int_{l\pi}^{(l+1)\pi} \left|\sum_{r,s\le n-1} \left(\frac{r}{n}\right)^i \left(\frac{s}{n}\right)^j a_{r,s} \cos^{(i)}\left(\frac{rx}{n}\right)\cos^{(j)}\left(\frac{ry}{n}\right)\right]^2 dx dy \right]\\
&=&\frac{1}{n^2}\int_{k\pi}^{(k+1)\pi}\int_{l\pi}^{(l+1)\pi} \E\left[\left|\sum_{r,s\le n-1} \left(\frac{r}{n}\right)^i \left(\frac{s}{n}\right)^j a_{r,s} \cos^{(i)}\left(\frac{rx}{n}\right))\cos^{(j)}\left(\frac{ry}{n}\right)\right|^2\right] dx dy\\
&\le &\frac{1}{n^2}\sum_{r,s\le n-1}\left(\frac{r}{n}\right)^{2i} \left(\frac{s}{n}\right)^{2j} \le 1.
\end{eqnarray*}
One is then left to employ the Markov inequality in order to conclude the proof:
\begin{eqnarray*}
&&\mathbb{P}\left( \|\phi^{(r-1)}\|_\infty \ge M\right)\\
&\le& \mathbb{P}\left(\sum_{i+j=r} \sup_{x\in K}\left|\partial_1^i \partial_2^j F_n\right| \ge M\right)\\
&\le& \frac{1}{M}\sum_{i+j=r} \E\left[\sup_{x\in K}\left|\partial_1^i \partial_2^j F_n\right|\right]\\
&\le& \frac{C_K}{M} \sum_{i+j=r} \E \left[\sqrt{\int_K \left(\left|\partial_1^i \partial_2^j F_n\right|^2(x)+\left|\partial_1^{i+1} \partial_2^j F_n\right|^2(x)+\left|\partial_1^i \partial_2^{j+1} F_n\right|^2(x)\right)dx }\right]\\
&\le& \frac{C_K}{M} \sum_{i+j=r} \sqrt{\E \left[\int_K \left(\left|\partial_1^i \partial_2^j F_n\right|^2(x)+\left|\partial_1^{i+1} \partial_2^j F_n\right|^2(x)+\left|\partial_1^i \partial_2^{j+1} F_n\right|^2(x)\right)dx \right]}\\
&\le& \frac{r C_K \sqrt{3}}{M}.
\end{eqnarray*}
\end{proof}

\paragraph{Estimation of the small ball}
From Equation \eqref{laborne}, upper bounding the probability $\mathbb P(l_{n,k,l} >t)$ thus reduces to establish a small ball estimate for $F_n( k \pi, l\pi)$. 
In this paragraph, we shall indeed establish such a small ball estimate, for any $1<\theta<\frac 3 2$:
\begin{equation}\label{SBE}
\mathbb{P}\left(\left| F_n( k \pi, l\pi)\right| \le \epsilon \right)\le C \left( \epsilon+\frac{1}{n^\theta} \right),
\end{equation}
provided that $\text{ord}(k)\ge \sqrt{n}$ and $\text{ord}(l)\ge \sqrt{n}$. To proceed, we use the celebrated \textit{Halasz} method. First of all, for some absolute constant $C>0$, we have
\begin{eqnarray}\label{Halasz}
\mathbb{P}\left(\left| F_n( k \pi, l\pi)\right| \le \epsilon \right)\le C \epsilon \int_\mathbb{R} \Phi_{F_n(k\pi,l\pi)}(\xi) e^{-\frac{\epsilon^2\xi^2}{2}}d\xi,
\end{eqnarray}
where $\Phi_{F_n(k\pi,l\pi)}(\cdot)$ is the characteristic function of $F_n(k\pi,l\pi)$. Note that if $X$ is a random variable $X$ such that $\E(X)=0$, $\E(X^2)=1$, then we have $|\E(e^{i \xi X})|\le \exp(-\xi^2/4)$ on an interval $[-\alpha,\alpha]$ for $\alpha>0$ small enough. As a result we may first write
\begin{eqnarray*}
\int_{|\xi|\le \alpha n} \Phi_{F_n(k\pi,l\pi)}(\xi) e^{-\frac{\epsilon^2\xi^2}{2}}d\xi &\le& \int_{|\xi|\le \alpha n} \Phi_{F_n(k\pi,l\pi)}(\xi) d\xi\\
&\le&\int_{|\xi|\le \alpha n} \prod_{1\le i,j\le n} e^{-\frac{\xi^2}{4n^2}\cos^2(i\frac{k\pi}{n})\cos^2(j\frac{l\pi}{n})}d\xi
\end{eqnarray*}
However, based on the following doubling formula $\cos(2x)=2\cos(x)^2-1$, we have the following dichotomy: either $|\cos(x)|\ge \frac 1 2 $ either $|\cos(2x)|\ge \frac 1 2 $. We may then restrict our attention to the set of indexes $(i,j)$ such that $|\cos
(i\frac{k\pi}{n})|\ge \frac 1 2$ and $|\cos
(j\frac{l\pi}{n})|\ge \frac 1 2$ whose cardinality is hence necessarily larger than $\frac{n^2}{4}$. This entails that
\begin{equation}\label{bound-product1}
\prod_{1\le i,j\le n} e^{-\frac{\xi^2}{4n^2}\cos^2(i\frac{k\pi}{n})\cos^2(j\frac{l\pi}{n})}\le  \left(e^{-\frac{\xi^2}{64n^2}}\right)^{\frac{n^2}{4}}=e^{-\frac{\xi^2}{256}}.
\end{equation}
However, since $\xi\mapsto e^{-\frac{\xi^2}{256}} \in L^1(\mathbb{R})$, the bound (\ref{bound-product1}) implies the existence of an absolute constant $C>0$ such that
\begin{equation}\label{boundI1}
\sup_{n\ge 1,\epsilon>0} \int_{|\xi|\le \alpha n} \Phi_{F_n(k\pi,l\pi)}(\xi) e^{-\frac{\epsilon^2\xi^2}{2}}d\xi\le C
\end{equation}
As a result, bounding the right hand side of (\ref{Halasz}) requires the control of the integral
\begin{equation}\label{I2}
I_2:=\epsilon\int_{|\xi|\ge \alpha n} \Phi_{F_n(k\pi,l\pi)}(\xi) e^{-\frac{\epsilon^2\xi^2}{2}}d\xi.
\end{equation}
Now, relying on Lemma \ref{Lemma-computation}, we get
$$\Phi_{F_n(k\pi,l\pi)}(\xi)=\prod_{\substack{ 1\le i \le \text{ord}(k)\\ 1\le j \le \text{ord}(l)}}\Phi_a\left(\frac{\xi}{n}\cos\left(\frac{i\pi}{\text{ord}(k)}\right)\cos\left(\frac{j\pi}{\text{ord}(l)}\right)\right)^{\frac{n^2}{\text{ord}(k)\text{ord}(l)}},$$
where $\Phi_a$ naturally stands for the characteristic function of the common law of the coefficients. By writing $u=\frac \xi n$, the integral (\ref{I2}) becomes

\begin{equation}\label{expressionI2}
I_2:=n \epsilon \int_{|u|>\alpha}
\prod_{\substack{1\le i \le \text{ord}(k)\\ 1\le j \le \text{ord}(l)}}\Phi_a\left(u\cos\left(\frac{i\pi}{\text{ord}(k)}\right)\cos\left(\frac{j\pi}{\text{ord}(l)}\right)\right)^{\frac{n^2}{\text{ord}(k)\text{ord}(l)}} 
e^{-\frac{u^2\epsilon^2 n^2}{2}} du.
\end{equation}

Now, for fixed $A<B<1$ and $u\in \mathbb{R}/\{0\}$, we denote by $\phi=\phi_{A,B,u}:[-1,1]\to[0,1]$ the Lipschitz function such that $\phi(x)=1$ when $|\Phi_a(u x)|\le A$, $\phi(x)=0$ when $|\Phi_a(ux)|\ge B$ and $\phi$ is linear on $|\Phi_a(ux)|\in [A,B]$. Note that, for any $(x,y)\in \mathbb{R}^2$, if $\phi(x)=1$ and $\phi(y)=0$, then necessarily (since $\Phi_a$ is $1$-Lipschitz):

$$|ux-uy|\ge |\Phi_a(ux)-\Phi_a(uy)| \ge B-A.$$

Besides, if $|\Phi_a(uz)|\in ]A, B[$ one may always find an interval $(x,y)$ containing $z$ such that (i) $|\Phi_a(ux)|=A$ and $|\Phi_a(u y)|=B$, (ii) for all $w\in (x,y)$ it holds that $|\Phi_a(u w)| \in [A, B]$. Since by definition $\phi$ is linear on $(x,y)$ we may deduce that 

\begin{equation}\label{control if the derivative}
\left|\phi'(z)\right|=\left|\frac{\phi(x)-\phi(y)}{x-y}\right|\le \frac{|u|}{B-A}.
\end{equation}
As a result, by recognizing a two-dimensional Riemann sum of the bivariate function $\Psi(x,y):=\phi(\cos(\pi x)\cos(\pi y))$,
\begin{eqnarray}\label{boundLipschitz}
&&\nonumber\left|\frac{1}{\text{ord}(k)\text{ord}(l)}\sum_{i=1}^{\text{ord}(k)}\sum_{j=1}^{\text{ord}(l)}\phi\left(\cos\left(\frac{i\pi}{\text{ord}(k)}\right)\cos\left(\frac{j\pi}{\text{ord}(l)}\right)\right)-\int_{[0,1]^2}\Psi(x,y) dx dy \right|\\
&\le& \frac{\|\nabla \Psi\|_\infty}{\min(\text{ord}(k),\text{ord}(l))}\le C_{A,B}\frac {|u|}{\sqrt{n}}.
\end{eqnarray}
Note that, by construction, $\phi$ implicitely depends on $u,A,B$ but in order to alledge the notations we will note carry this dependency in our notations. Now denote by $\rho$ the density of the image measure of Lebesgue on $[0,1]^2$ by the functional $(x,y)\mapsto\cos(\pi x)\cos(\pi y)$ so that we have $\int_{[0,1]^2}\Psi(x,y) dx dy=\int_{\mathbb{R}}\phi(t)\rho(t)dt$. Since $\rho \in L^1(\mathbb{R})$, it is a well known fact that
$$\lim_{\delta\to 0}\sup_{\lambda(A)\le \delta} \int_A \rho(t)dt=0.$$
Let us fix $\delta_0>0$ such that $\sup_{\lambda(A)\le \delta_0} \int_A \rho(t)dt<\frac 1 2$. Nevertheless, one may fix $A,B>0$ (eventually close to $1$) such that $\sup_{|u|>\alpha}\lambda \left(\{\phi \neq 1\}\right)<\delta_0$. Let us detail a bit this assertion. First of all, we notice that
\begin{eqnarray*}
\lambda \left(\{\phi \neq 1\}\right)&=&\frac 1 u  \int_0^u \textbf{1}_{\{|\Phi_a(t)|>A\}} dt
=\frac 1 u  \int_0^u \textbf{1}_{\{|\Phi_a(t)|^2>A^2\}} dt\\
&\le& \frac{1}{u A^2} \int_0^u |\Phi_a(t)|^2 dt
= \frac{1}{A^2}\E\left(\sin_c\left(u (a_1-a_2)\right)\right).
\end{eqnarray*}
Assuming first that $a_1-a_2$ does not have an atom at zero, the dominated convergence theorem ensures that $\E\left(\sin_c\left(u (a_1-a_2)\right)\right)$ goes to zero as $u$ goes to infinity. Besides, for every fixed $u$, it holds that $\int_0^u \textbf{1}_{\{|\Phi_a(t)|<A\}} dt$ goes to zero as $A$ tends to one. Together, these two conditions ensure the desired result, namely
\[
\lim_{A\to 1} \sup_{|u|>c} \frac 1 u \int_0^u \textbf{1}_{\{|\Phi_a(t)|>A\}} dt=0.
\]
Assume now that $a_1-a_2$ has an atom at zero. Note that $a_1-a_2$ is not a constant variable since its variance is positive. Thus, for some $0<c<1$, one can write $\Phi_{a_1-a_2}=|\Phi_a|^2= c + (1-c) \Psi$ where $\Psi$ is the characteristic function of the law of $a_1-a_2$ conditionally to $a_1\neq a_2$. Since, $\textbf{1}_{\{|\Phi_a(t)|^2>A^2\}}\le \textbf{1}_{\{|\Psi|>\frac{A^2-c}{1-c}\}}$ (with $\frac{A^2-c}{1-c}\to 1$ as $A\to1$), we may apply the previous reasoning to the characteristic function $\Psi$ which by construction does not have an atom at zero.
Under these conditions we infer that
\begin{eqnarray*}
\int_{[0,1]^2}\Psi(x,y) dx dy=\int_{\mathbb{R}} \phi(t)\rho(t)dt
\ge \int_{\{\phi=1\}} \rho(t)dt
\ge \frac 1 2.
\end{eqnarray*}
And relying on the bound (\ref{boundLipschitz}), if one assumes that $|u|\le \frac{\sqrt{n}}{4 C_{A,B}}$, then we get the following crucial estimate
\begin{equation}\label{minoration-cardinal-SB}
\sum_{\substack{ 1\le i \le \text{ord}(k)\\ 1\le j \le \text{ord}(l)}}\phi\left(\cos\left(\frac{i\pi}{\text{ord}(k)}\right)\cos\left(\frac{j\pi}{\text{ord}(l)}\right)\right)\ge \frac{1}{4} \text{ord}(k) \text{ord}(l).
\end{equation}
which implies that the cardinality of couple of indexes $(i,j)$ such that 
\[
\left|\Phi_a\left(u\cos\left(\frac{i\pi}{\text{ord}(k)}\right)\cos\left(\frac{j\pi}{\text{ord}(l)}\right)\right)\right|\le B,
\]
is greater than $\frac{1}{4} \text{ord}(k) \text{ord}(l)$ provided that $\frac{\sqrt{n}}{4 C_{A,B}}>|u|>\alpha$. Coming back to (\ref{expressionI2}), we may infer that
\begin{eqnarray*}
I_2&\le&n \epsilon \int_{\frac{\sqrt{n}}{4 C_{A,B}}>|u|>\alpha}\Phi_{F_n(k\pi,l\pi)}(n u) e^{-\frac{u^2\epsilon^2 n^2}{2}} du\\
&+&n \epsilon \int_{\frac{\sqrt{n}}{4 C_{A,B}}<|u|}\Phi_{F_n(k\pi,l\pi)}(n u ) e^{-\frac{u^2\epsilon^2 n^2}{2}} du\\
&\le& \frac{B^{\frac{n^2}{4}}}{4 C_{A,B}} n \sqrt{n}+n \epsilon \int_{\frac{\sqrt{n}}{4 C_{A,B}}<|u|}e^{-\frac{u^2\epsilon^2 n^2}{2}} du\\
&=& \frac{B^{\frac{n^2}{4}}}{4 C_{A,B}} n \sqrt{n}+\int_{\frac{n\sqrt{n}\epsilon}{4 C_{A,B}}<|x|}e^{-\frac{x^2}{2}} dx.
\end{eqnarray*}
Now let us give the final argument of this proof. If $\epsilon \ge \frac{1}{n^\theta}$ then $n\sqrt{n}\epsilon\ge n^{\frac 3 2 -\theta}$ and $\int_{\frac{n\sqrt{n}\epsilon}{4 C_{A,B}}<|x|}e^{-\frac{x^2}{2}} dx=o\left(\frac {1}{n^\theta}\right)$. Otherwise, if $\epsilon< \frac{1}{n^\theta}$ then 
\begin{eqnarray*}
\mathbb{P}\left(\left| F_n( k \pi, l\pi)\right| \le \epsilon \right)&\le& \mathbb{P}\left(\left| F_n( k \pi, l\pi)\right| \le \frac{1}{n^\theta} \right)\\
&\le& C \left( \epsilon+\frac{1}{n^\theta} \right).
\end{eqnarray*}
\paragraph{Synthesis}
This paragraph makes the synthesis of the two previous subsections. Note that, in the sequel, $C$ stands for some universal constant which may change from line to line. Up to using Lemma \ref{bound-order} and doubling the size of the square on which we consider the nodal line, we will assume that $\text{ord}(k), \text{ord}(l), \text{ord}(k+1), \text{ord}(l+1) \ge \sqrt{n}$. As a matter of fact, relying on the main estimate (\ref{SBE}) and Lemma \ref{bound-Sobolev}, we get that
$$\mathbb{P}\left(\left| F_n( k \pi, l\pi)\right| \le M\frac{(\pi \sqrt{2})^{r-1}}{(r-1)!}\right)\le C\left(M\frac{(\pi \sqrt{2})^{r-1}}{(r-1)!}+\frac{1}{n^\theta}+\frac{C}{M}\right).$$
Making an optimization on $M$, we get
$$\mathbb{P}\left(\left| F_n( k \pi, l\pi)\right| \le M\frac{(\pi \sqrt{2})^{r-1}}{(r-1)!}\right)\le C\left(\sqrt{\frac{(\pi \sqrt{2})^{r-1}}{(r-1)!}}+\frac{1}{n^\theta}\right).$$
As a result, provided that $\theta>1+\alpha$ we get that the existence of an absolute constant $C>0$ such that
\begin{equation}\label{Moment-unif}
\sup_{n,l,k}\E(l_{n,k,l}^{1+\alpha})<C.
\end{equation}

\subsection{End of the proof} \label{lafin}

In this final subsection, we make the compilation of the content of all previous subsections to establish the global universality result stated in the introduction.
\begin{thm}\label{theo.final}
Whatever the law of the entries $(a_{k,l})_{k,l\ge 1}$, as $n$ tends to infinity, we have 
\[
\lim_{n \to +\infty} \frac{\mathbb E[ \ell_{[0,\pi]^2}(f_n) ]}{n} = \frac{\pi^2}{2 \sqrt{3}}.
\]
\end{thm}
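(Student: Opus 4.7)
The plan is to decompose the macroscopic expectation via the partition identity \eqref{eq.sumlocal}, treat the contributions of lattice squares lying in the bulk of $[0,n\pi]^2$ using local universality at shifted points, control the boundary via the uniform moment estimate of Proposition \ref{pro.unibound}, and finally identify the universal constant by specialising to the Gaussian case already handled in Theorem \ref{Casgaussien}.

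More precisely, fix $0<a<b<1$ and split the sum \eqref{eq.sumlocal} according to whether $(k,l)$ belongs to the bulk $B_n(a,b):=\{(k,l)\in\{0,\ldots,n-1\}^2\,:\,an\le k,l\le bn\}$ or to its complement. For any sequence $(p_n,q_n)$ with $(p_n/n,q_n/n)\in[a,b]^2$, Proposition \ref{pro.unif} gives the $C^1$-convergence on $[0,\pi]^2$ of the shifted process $G_n(x,y)=F_n(p_n\pi+x,q_n\pi+y)$ towards the stationary Gaussian field $G_\infty$, which is non-degenerate by Remark \ref{rem.nondege}. Combining the continuity of the nodal length (Theorem \ref{thm.conti}) with the continuous mapping theorem, we deduce that the local nodal length in the square indexed by $(p_n,q_n)$, namely $l_{n,p_n,q_n}$, converges in distribution to $\ell_{[0,\pi]^2}(G_\infty)$. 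The uniform $(1+\alpha)$-moment bound of Proposition \ref{pro.unibound} provides the required uniform integrability, so that $\E[l_{n,p_n,q_n}]\to\E[\ell_{[0,\pi]^2}(G_\infty)]$ along any such sequence. A standard subsequence extraction then upgrades this to the uniform estimate
\[
\max_{(k,l)\in B_n(a,b)}\bigl|\E[l_{n,k,l}]-\E[\ell_{[0,\pi]^2}(G_\infty)]\bigr|\xrightarrow[n\to\infty]{}0.
\]

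For the boundary contribution, the complement of $B_n(a,b)$ contains at most $(1-(b-a)^2)n^2$ squares, and on each of them $\E[l_{n,k,l}]\le C$ by Proposition \ref{pro.unibound}. Summing the two contributions and letting $a\to 0$, $b\to 1$ yields
\[
\lim_{n\to\infty}\frac{\E[\ell_{[0,\pi]^2}(f_n)]}{n}=\E\bigl[\ell_{[0,\pi]^2}(G_\infty)\bigr],
\]
a quantity that is universal, i.e.\ independent of the law of the entries. To identify it, specialise the entire argument to the case of standard Gaussian entries: the left-hand side then equals $\pi^2/(2\sqrt{3})$ by Theorem \ref{Casgaussien}, forcing $\E[\ell_{[0,\pi]^2}(G_\infty)]=\pi^2/(2\sqrt{3})$ and concluding the proof.

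The main obstacle I anticipate is the uniformity of the convergence of the local expectations on the bulk: Proposition \ref{pro.unif} only yields convergence for each prescribed sequence $(p_n,q_n)$, and upgrading this to a genuine uniform estimate over $B_n(a,b)$ requires combining the sequential characterisation of convergence with the uniform integrability provided by Proposition \ref{pro.unibound} through a contradiction argument. A secondary, more technical point is to ensure that the boundary region can indeed be made arbitrarily small uniformly in $n$: this follows from the simple counting bound $|B_n(a,b)^c|=O\bigl((1-(b-a)^2)n^2\bigr)$ together with the fact that the moment estimate \eqref{.unibound} holds with a constant $C$ that does not depend on $n$, $k$ or $l$.
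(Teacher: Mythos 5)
Your proposal is correct and follows essentially the same route as the paper: decompose via \eqref{eq.sumlocal}, use the uniform $(1+\alpha)$-moment bound of Proposition~\ref{pro.unibound} both to control the boundary annulus and to furnish uniform integrability, upgrade the sequential $C^1$-convergence of Proposition~\ref{pro.unif} to a uniform convergence of $\E[l_{n,k,l}]$ on the bulk by a contradiction/extraction argument, and finally identify the constant via the Gaussian computation of Theorem~\ref{Casgaussien}. The one cosmetic difference is that you compare $\E[l_{n,k,l}]$ directly with the single number $\E[\ell_{[0,\pi]^2}(G_\infty)]$, whereas the paper first compares with $\E[l_{\infty,k,l}]$ (the nodal length of $F_\infty$ in the shifted square) and then invokes Proposition~\ref{cvgce-Finfini} to send the latter to $\E[\ell_{[0,\pi]^2}(G_\infty)]$; your shortcut dispenses with that intermediate step and is, if anything, slightly cleaner.
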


\begin{proof}
Let us recall Equation \eqref{eq.sumlocal} which express the global expectation as the sum of the microscopic contributions 
\[
\frac{\mathbb E[ \ell_{[0,\pi]^2}(f_n) ]}{n} = \frac{1}{n^2}\sum_{0\leq k,l\le n-1}\E\left(l_{n,k,l}\right).
\]
Let us fix $\epsilon>0$, and let us introduce $I_\epsilon:=[\epsilon,1-\epsilon]$ and
\[
\mathcal{A}_{n,\epsilon}:=\left(n I_\epsilon\cap \mathbb{N}\right)^2.
\]
One first notice that $\# \left(\mathcal{A}_{n,\epsilon}\right) \approx (1-2\epsilon)^2 n^2$. Next, using the bound (\ref{Moment-unif}), we may infer that
\begin{equation}\label{Bound1}
\frac{1}{n^2} \sum_{(k,l) \in \mathcal{A}_{n,\epsilon}^c} \E(l_{n,k,l})\le \frac{C}{n^2} \#\left(\mathcal{A}_{n,\epsilon}^c\right)\le C\left(1-(1-2\epsilon)^2\right).
\end{equation}
Let us denote by $l_{\infty,k,l}$ the length of the nodal set of the limit Gaussian process $F_{\infty}$ in the square $[k\pi,(k+1)\pi] \times [l\pi,(l+1)\pi]$.
Now we shall prove that
\begin{equation}\label{Bound2}
\lim_{n \to +\infty}\sup_{(k,l)\in \mathcal{A}_{n,\epsilon}} \left|\E\left[l_{n,k,l}\right]-\E\left[l_{\infty,k,l}\right]\right|= 0.
\end{equation}
To do so, we denote by $(p_n,q_n)\in\mathcal{A}_{n,\epsilon}$ one couple of integers for which the above maximum is reached. Next, thanks to Proposition \ref{pro.unif} and Remark \ref{rem.nondege}, we infer that the process 
\[
G_n(\cdot,\cdot)=F_n( p_n \pi +\cdot, q_n\pi+\cdot)
\] 
converges to the non-degenerate stationary Gaussian process $G_\infty$. Besides, relying on Proposition \ref{cvgce-Finfini}, the same conclusion holds for the process $F_\infty(p_n \pi+\cdot,q_n\pi+\cdot)$. Hence, via the content of Subsection \ref{Locuniv}, we indeed obtain that
\begin{equation}\label{unif2}
\lim_{n \to +\infty} \E\left[\phi(l_{n,p_n,q_n})\right]-\E\left[\phi(l_{\infty,p_n,q_n})\right]=0,
\end{equation}
for any continuous bounded function. Finally, for any $M>0$, we have
\begin{eqnarray*}
&&\frac{1}{n^2} \sum_{0\leq k,l\le n-1} \E[l_{n,k,l}\textbf{1}_{\{l_{n,k,l}>M\}}]\\
&&\le \frac{C}{n^2}\sum_{0\leq k,l\le n-1} \mathbb{P}\left(l_{n,k,l}>M\right)^{\frac{\alpha}{1+\alpha}}\\
&&\le \frac{C'}{n^2}\sum_{0\leq k,l\le n-1}\frac{1}{M^{\frac{\alpha}{1+\alpha}}}=\frac{C'}{M^{\frac{\alpha}{1+\alpha}}}.\\
\end{eqnarray*}
As a result, using the limit (\ref{unif2}) and taking $M$ large enough, we indeed get the asymptotics \eqref{Bound2}.
Finally, putting (\ref{Bound1}) and (\ref{Bound2}) together with Theorem \ref{Casgaussien}, we get that
\begin{equation}\label{final}
\lim_{n \to +\infty}\left|\frac{1}{n^2}\sum_{k,l\le n}\E\left(l_{n,k,l}\right) -\frac{\pi^2}{2\sqrt{3}}\right| = 0,
\end{equation}
which is the desired result.

\end{proof}
\section*{Acknowledgement}
Guillaume Poly is grateful to the Vietnamese Institute of Advances Studies for funding one month research stint in Hanoi, which led to some part of this research.
\bibliographystyle{alpha}
\newcommand{\etalchar}[1]{$^{#1}$}

\end{document}